\documentclass[12pt, reqno]{amsart}
\usepackage{amsmath, amsthm, amscd, amsfonts, amssymb, graphicx, color}

\textheight 22.6truecm
\textwidth 16truecm
\setlength{\oddsidemargin}{0.2in}\setlength{\evensidemargin}{0.2in}

\setlength{\topmargin}{-.5cm}

\newtheorem{theorem}{Theorem}[section]

\newtheorem{corollary}[theorem]{Corollary}
\theoremstyle{definition}

\theoremstyle{remark}

\numberwithin{equation}{section}

\usepackage{bm}

\newcommand{\clb}{\mathcal{B}}

\newcommand{\cld}{\mathcal{D}}
\newcommand{\cle}{\mathcal{E}}

\newcommand{\clh}{\mathcal{H}}

\newcommand{\clo}{\mathcal{O}}

\newcommand{\D}{\mathbb{D}}

\newcommand{\raro}{\rightarrow}

\newcommand{\ke}{\clh_{\cle}(K)}

\begin{document}


\setcounter{page}{1}

\title[Linear dynamics in reproducing Kernel Hilbert spaces]{Linear dynamics in reproducing Kernel Hilbert
spaces}

\dedicatory{Dedicated to Professor Kalyan Bidhan Sinha on the
occasion of his 75th birthday}

\author[Mundayadan]{Aneesh Mundayadan}
\address{Indian Statistical Institute, Statistics and Mathematics Unit, 8th Mile, Mysore Road, Bangalore, 560059,
India}
\email{aneeshkolappa@gmail.com}

\author[Sarkar]{Jaydeb Sarkar}
\address{Indian Statistical Institute, Statistics and Mathematics Unit, 8th Mile, Mysore Road, Bangalore, 560059,
India}
\email{jay@isibang.ac.in, jaydeb@gmail.com}

\subjclass[2010]{Primary 47A16, 46E22, 32K05, 47B32; Secondary
47B37, 37A99.}

\keywords{hypercyclicity, chaos, mixing, reproducing kernel Hilbert
spaces, multiplication operator, linear dynamics}

\begin{abstract}
Complementing earlier results on dynamics of unilateral weighted
shifts, we obtain a sufficient (but not necessary, with supporting
examples) condition for hypercyclicity, mixing and chaos for
$M_z^*$, the adjoint of $M_z$, on vector-valued analytic reproducing
kernel Hilbert spaces $\mathcal{H}$ in terms of the derivatives of
kernel functions on the open unit disc $\mathbb{D}$ in $\mathbb{C}$.
Here $M_z$ denotes the multiplication operator by the coordinate
function $z$, that is
\[
(M_z f) (w) = w f(w),
\]
for all $f \in \mathcal{H}$ and $w \in \mathbb{D}$. We analyze the
special case of quasi-scalar reproducing kernel Hilbert spaces. We also present a complete characterization of
hypercyclicity of $M_z^*$ on tridiagonal reproducing kernel Hilbert
spaces and some special classes of vector-valued analytic
reproducing kernel Hilbert spaces.
\end{abstract}

\maketitle

\section{Introduction}

Motivated by challenges in the dynamics of bounded linear operators on
Banach spaces, in this paper, we initiate the study of dynamics of
adjoints of the multiplication operators on analytic reproducing
kernel Hilbert spaces. Here we bring together two sets of ideas -
the Hypercyclicity Criterion and analytic reproducing kernels on the
open unit disc $\D$ in $\mathbb{C}$.

More specifically, motivated by Salas' work \cite{Salas} on the
characterization of hypercyclicity of backward weighted shift
operators on $l^p$ spaces, $1 \leq p < \infty$, we propose a general
question on hypercyclicity of the adjoints of the multiplication
operators on analytic reproducing kernel Hilbert spaces. In Theorems
\ref{Main} and \ref{chaos-Mzstar}, we present sufficient conditions
for testing hypercyclicity, topological mixing and chaoticity of
adjoints of the multiplication operators on reproducing kernel
Hilbert spaces in terms of derivatives of analytic kernel functions.

In the special case of scalar-valued analytic kernel functions, our
main theorem on hypercyclicity states the following (see Theorem
\ref{Main-sub}): Let $k : \D \times \D \raro \mathbb{C}$ be an
analytic kernel function and let $\clh(k)$ denote the reproducing
kernel Hilbert space corresponding to $k$ (see Section 3 for
definitions). Suppose that the multiplication operator $M_z$ on
$\clh(k)$ is bounded. Then $M_z^*$ on $\clh(k)$ is hypercyclic if
\[
\liminf_{n}\left(\frac {1} {(n!)^2}\frac{\partial^{2n} k}{\partial
z^n\partial \bar{w}^n} (0,0) \right) = 0.
\]
In the Hilbert space setting, this result unifies previous work on
sufficient conditions for hypercyclicity of weighted shift operators
(see Gethner and Shapiro \cite{Gethner-Shapiro}, Kitai \cite{Kitai},
Rolewicz \cite{Rolewicz}, and Salas \cite{Salas}).

The sufficient condition for hypercyclicity in Theorem \ref{Main} is
not necessary, in general (see the example in Subsection
\ref{example-1}).

For the context of hypercyclicity of translation operators on
reproducing kernel Hilbert spaces of entire functions, we refer the
reader to Baranov \cite{Baranov}, Chan and Shapiro
\cite{Chan-Shapiro}, Garcia,  Hernandez-Medina and Portal
\cite{Garcia} and Godefroy and Shapiro \cite{Godefroy-Shapiro}. Also
see Bonet \cite{Bonet}.

After a preliminary section devoted to fixing the notations and
recalling the basic facts about dynamics of bounded linear
operators, in Section 3, we describe the necessary background of
reproducing kernel Hilbert spaces. Some of our results are less
standard.  Moreover, it is worth pointing out that the ``backward
shift'' behavior of the adjoints of multiplication operators on
analytic reproducing kernel Hilbert spaces in Theorem
\ref{prop-Mzstar} is of independent interest.

Section 4 contains the main results on dynamics of $M_z^*$ on
analytic reproducing kernel Hilbert spaces. In Section 5, we provide
an explicit example of an analytic reproducing kernel space on which
$M_z^*$ is hypercyclic but does not satisfy the sufficient condition
for hypercyclicity in Theorem \ref{Main}. The final section deals with the converse of Theorem
\ref{Main} for tridiagonal reproducing kernel Hilbert spaces and a class of vector valued analytic reproducing kernel Hilbert spaces.

\section{Preliminaries}\label{Sec-P}

In this section we fix some notation and recall definitions and
facts on dynamics of bounded linear operators. For more details
concerning linear dynamics, we refer the reader to the monographs by
F. Bayart and E. Matheron \cite{Bayart-Matheron} and K.G.
Grosse-Erdmann and A. Peris \cite{Erdmann-Peris}.

Let $X$ be a Banach space and let $\clb(X)$ be the set of all
bounded linear operators on $X$. Let $T$ be a bounded linear
operator on $X$. We say that $T$ is:

\noindent (i) \textit{hypercyclic} if there exists
$x\in X$ such that
\[
\{x,Tx,T^2x,\ldots\}
\]
is dense in $X$,

\noindent (ii) \textit{chaotic} if $T$ is hypercyclic and
it has a dense set of periodic points (a vector $y$ in $X$ is called
\textit{periodic} for $T$ if there exists $k \in \mathbb{N}$ such
that $T^k y = y$), and

\noindent (iii) \textit{topologically
transitive}, if
\[
\mathop{\bigcup}_{m=0}^\infty T^{-m}(U),
\]
is dense in $X$ for every non-empty open set $U$ in $X$.

From the point of view of hypercyclic operators, in this paper,
Banach spaces are always assumed to be separable.

A well known theorem of Birkhoff, known as Birkhoff's transitivity theorem,
states that (cf. \cite{Bayart-Matheron}): $T$ in $\clb(X)$ is
hypercyclic if and only if $T$ is topologically transitive.

Another important notion in dynamical systems is topological mixing:
$T$ in $\clb(X)$ is said to be \textit{topologically mixing} if,
given any two non-empty open subsets $U$ and $V$ of $X$, there
exists $N$ in $\mathbb{N}$ such that
\[
T^{-j}(U)\cap V \neq \O,
\]
for all $j\geq N$.

Classical examples of hypercyclic operators are translation
operators \cite{Birkhoff},  differential operators \cite{MacLane}
and commutators of translation operators \cite{Godefroy-Shapiro}.
The first example of a hypercyclic backward weighted shift was
produced by Rolewicz \cite{Rolewicz}: for each $1\leq p <\infty$ and
$|\alpha|>1$, the backward weighted shift $\alpha B$ is hypercyclic
on $l^p$, where $B$ on $l^p$ is the unweighted backward shift:
\[
\displaystyle B(\{c_0, c_1, c_2, \ldots\})=  \{c_1, c_2, c_3
\ldots\},
\]
for all $\{c_0, c_1, c_2 \ldots\} \in l^p$ (see also Kitai
\cite{Kitai} and Gethner and Shapiro \cite{Gethner-Shapiro}). Salas
\cite{Salas} provides a further improvement by characterizing
hypercyclicity of backward weighted shifts: Let $\lambda =
\{\lambda_n\}_{n\geq 1}$ be a bounded sequence of positive real
numbers. Then the backward weighted shift $B_{\lambda}$ on $l^p$, $1
\leq p < \infty$, defined by
\[
\displaystyle B_{\lambda} (\{c_0, c_1, c_2, \ldots\})= \{\lambda_1
c_1, \lambda_2 c_2, \lambda_3 c_3, \ldots\},
\]
for all $\{c_0, c_1, c_2, \ldots\} \in l^p$, is hypercyclic if and
only if
\[
\displaystyle \limsup_{n} \{\lambda_1 \lambda_2 \cdots
\lambda_n\}=\infty.
\]
For our purposes, we interpret the $p = 2$ case from a function
Hilbert space \cite{Shields} point of view: Given a sequence of
positive real numbers $\beta= \{\beta_n\}_{n\geq 0}$, we write $H^2(\beta)$
for the Hilbert space of all formal power series
\[
f(z)=\sum_{n \geq 0} a_n z^n,
\]
such that
\[
\displaystyle \|f\|^2_{H^2(\beta)}:=\sum_{n\geq 0} \frac {|a_n|^2}
{\beta_n^2}<\infty.
\]
Clearly, the set of functions $\{\beta_n z^n\}$ forms an orthonormal
basis in $H^2(\beta)$. If, in addition, we assume that
\[
\displaystyle \limsup_{n} \frac{\beta_{n+1}}{\beta_n} \leq 1,
\]
then $H^2(\beta) \subseteq \clo(\mathbb{D})$ (cf. Shields
\cite{Shields}), the space of all analytic functions on
$\mathbb{D}$. It also follows that the multiplication operator $M_z$
on $H^2(\beta)$ defined by
\begin{equation*}
(M_z f) (w) = w f(w),
\end{equation*}
for all $f \in H^2(\beta)$ and $w \in \D$, is bounded if and only if
\[
\sup_n \frac{\beta_n}{\beta_{n+1}} < \infty.
\]
In this case, Salas' classification result can be interpreted as
follows: $M_z^*$ is hypercyclic on $H^2(\beta)$ if and only if
\[
\liminf_n \beta_n=0.
\]
Moreover, the Costakis-Sambarino theorem \cite{Costakis-Sambarino}
states that: $M_z^*$ is mixing if and only if
\[
\lim_n \beta_n=0.
\]

The well-known and useful sufficient conditions for hypercyclicity
of operators on Banach spaces states the following, (see Kitai \cite{Kitai},
Gethner and Shapiro \cite{Gethner-Shapiro}, Godefroy and Shapiro
\cite{Godefroy-Shapiro}, and Bes and Peris \cite{Bes-Peris}): Let
$X$ be a Banach space, $D$ be a dense set in $X$, and let $T \in
\clb(X)$. If there exist a strictly increasing sequence $\{n_k\}
\subseteq \mathbb{N}$ and a map $S : D \rightarrow D$ such that

(i) $T^{n_k}S^{n_k} \rightarrow I_D$,

(ii) $T^{n_k} \rightarrow 0$, and

(iii) $S^{n_k} \rightarrow 0$,\\
pointwise in $D$, then $T$ is
hypercyclic. Moreover, if
\[
n_k=k,
\]
for all $k\in \mathbb{N}$, then $T$ is topologically mixing.

The topological mixing part in the above result is due to Costakis
and Sambarino (see Theorem 1.1, \cite{Costakis-Sambarino}).

For our purposes it is convenient to use the following version of
the Hypercyclicity Criterion (cf. Definition 1.1 in
\cite{Bernal-Erdmann} and also see \cite{Bes-Peris}):

\begin{theorem}\label{thm-hypc} \textsf{(The Hypercyclicity Criterion)}
Let $X$ be a Banach space, $D$ be a dense set in $X$, and let $T \in
\clb(X)$. Suppose that $\{n_k\} \subseteq \mathbb{N}$ is a strictly
increasing sequence. If

(i) $T^{n_k} \rightarrow 0$ pointwise on $D$, and

(ii) for each $f \in D$ there exists a sequence $\{f_k\}_{k\geq 1}
\subseteq X$ such that
\[
f_k \rightarrow 0 \quad \quad ~~~~~~~~~~~\mbox{and} \quad \quad
~~~~~~~~~T^{n_k} f_k \rightarrow f,
\]
then $T$ is hypercyclic. Moreover, if
\[
n_k=k,
\]
for all $k\in \mathbb{N}$, then $T$ is topologically mixing.
\end{theorem}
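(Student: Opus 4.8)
The plan is to deduce hypercyclicity from Birkhoff's transitivity theorem, recalled above: since the ambient Banach space $X$ is assumed separable, it suffices to prove that $T$ is topologically transitive, i.e.\ that for any two nonempty open sets $U$ and $V$ there is an integer $m$ with $T^{m}(U)\cap V\neq\emptyset$. So I would fix arbitrary nonempty open sets $U,V\subseteq X$ and work toward producing such an $m$.

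First I would use the density of $D$ to choose points $u\in U\cap D$ and $v\in V\cap D$; landing inside $D$ is the whole point, since it lets me feed $u$ into hypothesis (i) and $v$ into hypothesis (ii). Applying (ii) to $v$ yields a sequence $\{v_k\}\subseteq X$ with $v_k\to 0$ and $T^{n_k}v_k\to v$. The candidate vectors are then $x_k:=u+v_k$, and the key (and entirely routine) computation is the pair of limits
\[
x_k = u + v_k \longrightarrow u \quad\text{and}\quad T^{n_k}x_k = T^{n_k}u + T^{n_k}v_k \longrightarrow v,
\]
where the first limit uses $v_k\to 0$, and the second combines hypothesis (i) applied to $u$ (so $T^{n_k}u\to 0$) with $T^{n_k}v_k\to v$ from (ii) together with the linearity of each $T^{n_k}$. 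Since $U$ and $V$ are open with $u\in U$ and $v\in V$, there is an index $k_0$ such that for all $k\geq k_0$ we simultaneously have $x_k\in U$ and $T^{n_k}x_k\in V$; in particular $T^{n_k}(U)\cap V\neq\emptyset$. This establishes topological transitivity (realized along the subsequence $\{n_k\}$), and Birkhoff's theorem then upgrades it to hypercyclicity of $T$.

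For the mixing assertion I would simply re-read the same argument in the case $n_k=k$. There the conclusion becomes ``$x_k\in U$ and $T^{k}x_k\in V$ for \emph{all} $k\geq k_0$'', equivalently $x_k\in U\cap T^{-k}(V)$, so that $T^{-j}(V)\cap U\neq\emptyset$ for every $j\geq k_0$. After interchanging the names of $U$ and $V$, this is exactly the defining property of topological mixing. I do not expect a genuine analytic obstacle here: all the required convergence is handed to us directly by hypotheses (i) and (ii), and the only thing demanding care is the bookkeeping of the direction of the set inclusions and the observation that the threshold $k_0$ is permitted to depend on $U$ and $V$.
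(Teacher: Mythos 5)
Your proof is correct. Note, however, that the paper itself gives no proof of this theorem: it is stated as a known result, cited from the literature (Definition 1.1 of Bernal-Gonz\'alez and Grosse-Erdmann, and B\`es--Peris), with the mixing part attributed to Costakis and Sambarino. So there is no internal argument to compare against; what you have written is the standard textbook derivation, and it is sound. You correctly exploit the standing separability assumption (stated in Section 2 of the paper) so that Birkhoff's transitivity theorem applies; the choice $x_k = u + v_k$ with $u \in U \cap D$, $v \in V \cap D$ is exactly the classical device, hypothesis (i) kills $T^{n_k}u$ while hypothesis (ii) drives $T^{n_k}v_k \to v$, and your bookkeeping of the inclusions $T^{-n_k}(V) \cap U \neq \emptyset$ (with the harmless relabeling of $U$ and $V$) matches the paper's definitions of transitivity and mixing. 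The only point worth making explicit is that transitivity along the subsequence $\{n_k\}$ suffices for hypercyclicity, whereas the mixing conclusion genuinely needs the full sequence $n_k = k$ to produce the threshold $N$ beyond which \emph{every} $j$ works --- a distinction you do observe.
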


We now proceed to a chaoticity criterion (see Bonilla and
Grosse-Erdmann \cite{Bonilla-Erdmann}, page 386): Let $T$ be an
operator on a Banach space $X$, $D$ be dense in $X$ and
$S:D\rightarrow D$ be a map. If

(i) $\sum_{n\geq 0} T^n x$ and $\sum_{n\geq 0} S^n x$ are
unconditionally convergent, and

(ii) $TS x = x$,

\noindent for all $x\in D$, then $T$ is chaotic on $X$.

\noindent The above conditions are also sufficient for another
important notion in linear dynamics, called frequent hypercyclicity,
initiated by F. Bayart and S. Grivaux \cite{Bayart-Grivaux}. We
refer the reader to \cite{Bayart-Matheron} and
\cite{Bonilla-Erdmann} for an introductory discussion of the topic.

Recall that a series $\sum_{n} u_n$ in a Banach space $X$ is said to
be \textit{unconditionally~convergent} if $\sum_n u_{\sigma(n)}$ is
convergent for all permutations $\sigma$ on $\mathbb{N}$. It is well
known that the unconditional convergence of $\sum_n u_n$ is
equivalent to the following: for $\epsilon>0$, there exists $N\in
\mathbb{N}$ such that
\begin{equation*}
\Big \|\sum_{n\in F}u_n\Big \|<\epsilon,
\end{equation*}
for all finite subsets $F$ of $\{N,N+1,N+2,\cdots\}$ (see, for
instance \cite{Bayart-Matheron}, page 138).

The following version of the Chaoticity Criterion will be useful in what
follows. This is probably known to experts, though we cannot find
an exact reference (however, see Remark 9.10 in \cite{Erdmann-Peris}
and page 386 in \cite{Bonilla-Erdmann}).

\begin{theorem}\textsf{(Chaoticity Criterion)} \label{chaos}
Let $X$ be a Banach space,  $T \in \clb(X)$ and $D$ be a dense set.
Then $T$ is chaotic if for each $x\in D$, there exists a sequence
$\{u_k\}_{k\geq 0}$ in $X$ with $u_0=x$ such that
\[
\displaystyle \sum_{n\geq 0} T^n x \quad \quad \mbox{and} \quad
\quad \sum_{n\geq 0} u_n,
\]
are unconditionally convergent, and
\[
T^nu_k=u_{k-n},
\]
for all $k\geq n$.
\end{theorem}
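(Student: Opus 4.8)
The plan is to verify directly the two defining properties of chaos: that $T$ is hypercyclic and that $T$ has a dense set of periodic points. These I would treat separately, the first being essentially immediate from the Hypercyclicity Criterion already stated, and the second requiring an explicit construction. For hypercyclicity I would apply Theorem \ref{thm-hypc} with $n_k=k$. Since $\sum_{n\geq 0}T^n x$ converges for each $x\in D$, its general term tends to zero, so $T^n x\raro 0$ pointwise on $D$, which is condition (i). For condition (ii), given $f\in D$ with associated sequence $\{u_k\}$, I would set $f_k=u_k$: convergence of $\sum_{n\geq 0}u_n$ forces $u_k\raro 0$, while the hypothesis gives $T^k u_k=u_0=f$, so $T^k f_k=f$ for every $k$. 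Both conditions then hold, and in fact this already yields topological mixing.

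For the periodic points, fix $x\in D$ with its associated sequence $\{u_k\}$, and for each $N\geq 1$ define
\[
v_N=\sum_{k=0}^{\infty}T^{kN}x+\sum_{k=1}^{\infty}u_{kN}.
\]
Both series converge because they are subfamilies of the unconditionally convergent families $\{T^n x\}_{n\geq 0}$ and $\{u_n\}_{n\geq 0}$, and subfamilies of unconditionally convergent families have convergent sums. Applying $T^N$ termwise (legitimate since $T^N$ is bounded, hence continuous) and using the relation $T^N u_{kN}=u_{(k-1)N}$ for $k\geq 1$ together with $u_0=x$, a reindexing of both resulting series shows $T^N v_N=v_N$. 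Thus $v_N$ is a periodic point of $T$.

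To conclude density of the periodic points I would estimate $v_N-x=\sum_{k\geq 1}T^{kN}x+\sum_{k\geq 1}u_{kN}$. Given $\epsilon>0$, the finite-subset characterization of unconditional convergence recalled in the preliminaries furnishes an $M$ such that every finite sum of the $T^n x$ (respectively the $u_n$) over indices $\geq M$ has norm $<\epsilon$; choosing $N\geq M$ makes every index $kN$ with $k\geq 1$ exceed $M$, so each partial sum, and hence its limit, is norm-controlled, giving $\|v_N-x\|\leq 2\epsilon$. Since $D$ is dense, the set of periodic points is dense, and combining this with the hypercyclicity established above shows that $T$ is chaotic.

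The main obstacle is precisely this density of periodic points: one must manufacture, arbitrarily close to a given $x\in D$, a genuine fixed point of some power $T^N$. The construction above achieves this by periodizing both the forward orbit $\{T^n x\}$ and the backward sequence $\{u_n\}$ along the arithmetic progression $N\mathbb{Z}$. The delicate point, and the reason the hypothesis is phrased in terms of unconditional rather than merely ordinary convergence, is that unconditional convergence is exactly what guarantees simultaneously that the extracted subseries indexed by multiples of $N$ converge and that their tails shrink uniformly, so that $v_N\raro x$ as $N\to\infty$.
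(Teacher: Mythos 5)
Your proposal is correct and takes essentially the same route as the paper's own proof: the same application of the Hypercyclicity Criterion with $n_k=k$ and $f_k=u_k$, and the same periodic-point construction, since your $v_N$ is exactly the paper's $x_p$ (with $p=N$) and both the fixed-point computation $T^Nv_N=v_N$ and the approximation of $x$ are identical. The only difference is cosmetic: you spell out the tail estimate giving $v_N\to x$ via the finite-subset characterization of unconditional convergence, a detail the paper leaves implicit.
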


\begin{proof}
Observe that if $x \in D$, then
\[
T^n x \rightarrow 0 ~~\text{and}~~f_n:=u_n\rightarrow 0,
\]
as $n \raro \infty$. Moreover,
\[
T^nf_n=T^nu_n=u_{n-n}=u_0=x,
\]
for all $n \geq 0$, and hence, in particular
\[
T^nf_n \raro x,
\]
as $n \raro \infty$. This shows that $T$ satisfies the
Hypercyclicity Criterion with respect to the same dense set $D$ and
the sequence $n_k =k$ for all $k\in \mathbb{N}$, and hence $T$ is
hypercyclic. It remains to show that $T$ has a dense set of periodic
points in $X$. Define
\begin{equation*}
x_p = \sum_{n\geq 1} T^{np} x + x + \sum_{n\geq 1} u_{np},
\end{equation*}
for all $p\geq 1$. By the unconditional convergence of the given
series we have that $x_p\rightarrow x$, as $p\rightarrow \infty$. On
the other hand
\begin{equation*}
T^p x_p = (\sum_{n\geq 1} T^{(n+1)p} x) + T^p x + (\sum_{n\geq 1}
T^p u_{np})
\end{equation*}
together with the properties of $\{u_n\}$ gives
\[
T^p(x_p)=x_p,
\]
that is, $x_p$ is a periodic point for $T$. The result now follows
from the fact that $D$ is dense in $X$ and $\{x_p\}$ approximates
the element $x$ in $D$.
\end{proof}

For our purposes (cf. Theorem \ref{chaos-Mzstar}) it is also
relevant to recall the complete characterization of chaoticity for
backward weighted shifts $B_{\lambda}$ on $l^p$ spaces (see
Grosse-Erdmann \cite{Erdmann}): $B_\lambda$ on $l^p$ is chaotic if
and only if
\[
\sum_n(\lambda_1 \lambda_2 \cdots \lambda_n)^{-p} < \infty.
\]

\noindent The case $p=2$ yields that $M_z^*$ on $H^2(\beta)$ is
chaotic if and only if
\[
\sum_n\beta_n^{2} < \infty.
\]

\section{Reproducing kernel Hilbert spaces and derivatives}

We first recall the basics and constructions of reproducing kernel
Hilbert spaces (see \cite{Aronszajn} and \cite{Paulsen}). Let $\cle$
be a Hilbert space. An operator-valued function $K : \D \times \D
\raro \clb(\cle)$ is called an \textit{analytic kernel} if $K$ is
analytic in the first variable and
\[
\sum_{i,j = 1}^n \langle K(z_i, z_j) \eta_j, \eta_i \rangle_{\cle}
\geq 0,
\]
for all $\{z_i\}_{i=1}^n \subseteq \D$ and $\{\eta_i\}_{i=1}^n
\subseteq \cle$ and $n \in \mathbb{N}$. In this case there exists a
Hilbert space $\clh_{\cle}(K)$ of $\cle$-valued analytic functions
on $\D$ such that
\[
\{K(\cdot, w) \eta : w \in \D, \eta \in \cle\},
\]
is a total set in $\clh_{\cle}(K)$. Here for $w\in \mathbb{D}$ and
$\eta\in \cle$, the symbol $K(\cdot, w)\eta$ represents the function
\[
(K(\cdot, w)\eta) (z) = K(z,w)\eta,
\]
for all $z\in \mathbb{D}$. It is easy to verify that
\[
\langle f, K(\cdot, w) \eta \rangle_{\clh_{\cle}(K)} = \langle f(w),
\eta \rangle_{\cle},
\]
for all $f\in \ke$, $w \in \D$ and $\eta \in \cle$ (cf.
\cite{Kumari-etal}). In other words
\begin{equation}\label{eq-ev}
\langle ev_w f, \eta \rangle_{\cle} = \langle f, K(\cdot, w) \eta
\rangle_{\clh_{\cle}(K)},
\end{equation}
where $ev_w : \clh_{\cle}(K) \raro \cle$ is the (bounded) evaluation
map defined by
\[
ev_w f = f(w),
\]
for all $w \in \D$ and $f \in \clh_{\cle}(K)$. We call the Hilbert
space $\ke$ the \textit{analytic reproducing kernel Hilbert space}
corresponding to the kernel $K$.

\noindent It also follows from \eqref{eq-ev} that the functions
\[
w \mapsto ev_w \in \clb(\ke, \cle),
\]
and
\[
w \mapsto K(z, w) \in \clb(\cle) \quad \quad (z\in \mathbb{D}),
\]
are analytic and co-analytic on $\D$, respectively, and
\[
K(z, w)^* = K(w, z),
\]
and
\[
ev_z \circ ev_w^* = K(z, w),
\]
for all $z, w \in \D$. Furthermore, note that
\[
\begin{split}
\|K(\cdot, w) \eta\|^2_{\ke} & = \langle K(\cdot, w) \eta, K(\cdot,
w) \eta \rangle_{\clh_{\cle}(K)}
\\
& = \langle K(w, w)\eta, \eta\rangle_\cle,
\end{split}
\]
that is
\begin{equation}\label{eq-Knorm}
\|K(\cdot, w) \eta\|_{\ke} =\| K(w, w)^{1/2}\eta\|_\cle,
\end{equation}
for all $w \in \D$ and $\eta \in \cle$.

Conversely, let $\clh$ be a Hilbert space of analytic functions on
$\mathbb{D}$ taking values in $\cle$, and let the evaluation map
$ev_w : \clh \raro \cle$ (defined as above) be continuous for all $w
\in \D$. Then $\clh$ is an analytic reproducing kernel Hilbert space
corresponding to the $\clb(\cle)$-valued analytic kernel $K$ on $\D$
where $K(z, w) = ev_z \circ ev_w^*$ for all $z, w \in \D$.

Now let $\ke$ be an analytic reproducing kernel Hilbert space. Note
that the $\clb(\clh_\cle(K), \cle)$-valued function $z\mapsto
ev_{z}$ is analytic on $\mathbb{D}$. Indeed, for fixed $f\in
\clh_\cle(K)$ and $\eta \in \cle$, the function
\[
z\mapsto \langle ev_z(f),\eta\rangle_{\cle} = \langle
f(z),\eta \rangle_{\cle},
\]
is analytic on $\D$. Since analyticity in the strong operator
topology is equivalent to the analyticity in operator norm (cf.
Chapter 5, Theorem 1.2, \cite{Taylor-Lay}), it follows that
$z\mapsto ev_{z}$ is analytic on $\D$. Now if $z_0\in \mathbb{D}$,
then we have the following power series (in powers of $(z-z_0)$)
expansion
\[
ev_{z}= \sum_{n \geq 0} (z-z_0)^n \frac{\partial^n ev_z}{\partial
z^n}\Big |_{z=z_0},
\]
in the operator norm topology. This implies that $\frac{\partial^n
ev_z}{\partial z^n}\Big |_{z=z_0} :\clh_\cle(K) \raro \cle$ is a
bounded linear operator for each $z_0\in \D$ and $n \geq 0$.
Moreover, since
\[
\begin{split}
\Big(\frac{ev_{z_0 + h} - ev_{z_0}}{h}\Big) f & = \frac{f(z_0 + h) -
f(z_0)}{h}
\\
& \raro f'(z_0),
\end{split}
\]
as $h \raro 0$, we have that
\[
\begin{split}
\frac{\partial  ev_z}{\partial z}\Big |_{z=z_0}(f)= f'(z_0),
\end{split}
\]
for all $f \in \ke$ and $z_0\in \D$. Similarly
\[
\begin{split}
\frac{\partial^n ev_z}{\partial z^n}\Big |_{z=z_0} (f) = f^{(n)}(z_0),
\end{split}
\]
for all $f \in \ke$ and $z_0 \in \D$. On the other hand, the same
argument as above can now be used to show that $\frac{\partial^n
ev_w^*}{{\partial} \bar{w}^n}\Big |_{w=w_0} : \cle \raro \ke$ is a
bounded linear operator for each $w_0 \in \D$ and $n \geq 0$. Now
using the identity
\[
ev_{\lambda}^* \eta = K(\cdot, \lambda) \eta,
\]
for all $\lambda \in \D$ and $\eta \in \cle$, we find, for each $w_0
\in \D$, that
\[
\begin{split}
\Big(\frac{ev^*_{w_0 + h} - ev^*_{w_0}}{\overline{h}}\Big) \eta & =
\frac{K(\cdot, w_0 + h) \eta - K(\cdot, w_0) \eta}{\overline{h}}
\\
& \raro \Big (\frac{\partial K}{\partial \bar{w}} (\cdot, w_0)\Big)
\eta,
\end{split}
\]
as $h \raro 0$. Thus
\[
\frac{\partial ev_w^*}{{\partial} \bar{w}}\Big|_{w=w_0}(\eta) =  \Big(\frac{\partial
K}{\partial \bar{w}} (\cdot, w_0)\Big) \eta,
\]
and, again, we have that
\[
\frac{\partial^n ev_w^*}{{\partial} \bar{w}^n}\Big|_{w=w_0}(\eta) =
\Big(\frac{\partial^n K}{\partial \bar{w}^n} (\cdot, w_0)\Big) \eta,
\]
for all $w_0 \in \D$, $\eta \in \cle$ and $n \geq 0$. Moreover, we
have
\[
\begin{split}
\frac{\partial K(\cdot,w)}{\partial
\bar{w}}\Big|_{w=w_0}(\eta)&=\lim_{h\rightarrow 0}
\frac{ev_{w_0+h}^*(\eta)-ev_{w_0}^*(\eta)}{\overline{h}}\\&=\Big
(\frac{\partial ev_z}{\partial z}\Big |_{z=w_0}\Big )^*\eta,
\end{split}
\]
and similarly,
\begin{equation}\label{adjoint}
\frac{\partial^n K(\cdot,w)}{\partial
\bar{w}^n}\Big|_{w=w_0}=\frac{\partial^nev_z}{\partial z^n}\Big
|_{z=w_0}^*,
\end{equation}
for all $n\in \mathbb{Z}_+$. Furthermore, for all $f \in \ke$, $n
\in \mathbb{Z}_+$, $w \in \D$ and $\eta \in \cle$, it follows that
\[
\begin{split}
\Big \langle f, \Big(\frac{\partial^n K}{\partial \bar{w}^n} (\cdot, w)
\Big|_{w=w_0}\Big)\eta \Big \rangle_{\ke} & = \frac{\partial^n}{\partial {w}^n}
\Big(\langle f,K(\cdot,w)\eta\rangle_{\ke} \Big)\Big |_{w=w_0}
\\
& = \frac{\partial^n}{\partial
{w}^n} \Big( \langle f(w),\eta\rangle_\cle \Big)\Big |_{w=w_0} \\
& = \langle f^{(n)}(w_0), \eta \rangle_\cle,
\end{split}
\]
that is
\[
\langle f^{(n)}(w_0), \eta \rangle_\cle = \Big\langle f, \frac{\partial^n
ev_w^*}{{\partial} \bar{w}^n}\Big |_{w=w_0} \eta \Big\rangle_{\ke}.
\]
In particular, if $n \in \mathbb{Z}_+$ and $\eta \in \cle$,
then the function
\[
z\mapsto \Big(\frac{\partial^n K}{\partial \bar{w}^n}
 (z, 0)\Big) \eta,
\]
is in $\ke$, which follows from \eqref{adjoint}. Therefore, we have
proved the following:

\begin{theorem}\label{collect}
Let $\ke$ be an analytic reproducing kernel Hilbert space. Let
\[
K_{n} (z) = \frac{\partial^n K}{\partial \bar{w}^n} (z, 0),
\]
and let
\[
K_{n, \eta} (z) = K_n(z) \eta,
\]
for all $z \in \D$, $\eta \in \cle$ and $n \in \mathbb{Z}_+$. Then
$K_{n} (z) \in\clb(\cle)$ for all $z \in \D$, $K_{n, \eta} \in \ke$
and
\[
\langle f^{(n)}(0), \eta \rangle_{\cle} = \langle f, K_{n, \eta}
\rangle_{\ke},
\]
for all $f \in \ke$, $\eta \in \cle$ and $n \in \mathbb{Z}_+$.
\end{theorem}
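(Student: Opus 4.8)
The plan is to obtain all three conclusions by specializing, to the base point $w_0 = 0$, the identities already assembled in the discussion preceding the statement; the governing observation is that $K_{n,\eta}$ is precisely the image of $\eta$ under the bounded operator $\frac{\partial^n ev_w^*}{\partial \bar{w}^n}\big|_{w=0} : \cle \raro \ke$. So the work is one of identification and bookkeeping rather than fresh analysis.

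First I would recall that $w \mapsto ev_w^*$ is analytic in the operator-norm topology, so $\frac{\partial^n ev_w^*}{\partial \bar{w}^n}\big|_{w=w_0}$ is a bounded map $\cle \raro \ke$ for every $w_0 \in \D$ and $n \geq 0$. Combined with the established identity $\frac{\partial^n ev_w^*}{\partial \bar{w}^n}\big|_{w=w_0}(\eta) = \big(\frac{\partial^n K}{\partial \bar{w}^n}(\cdot, w_0)\big)\eta$, putting $w_0 = 0$ yields $K_{n,\eta} = \frac{\partial^n ev_w^*}{\partial \bar{w}^n}\big|_{w=0}\eta$. Since the right-hand side is the image of $\eta \in \cle$ under a bounded operator into $\ke$, it lies in $\ke$, which gives the membership $K_{n,\eta} \in \ke$.

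Next, for $K_{n}(z) \in \clb(\cle)$, I would write $K_n(z)\eta = K_{n,\eta}(z) = ev_z\big(\frac{\partial^n ev_w^*}{\partial \bar{w}^n}\big|_{w=0}\eta\big)$, thereby exhibiting $K_n(z)$ as the composition $ev_z \circ \frac{\partial^n ev_w^*}{\partial \bar{w}^n}\big|_{w=0}$ of the two bounded operators $\cle \raro \ke \raro \cle$; hence $K_n(z)$ is itself a bounded operator on $\cle$ for every $z \in \D$.

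Finally, the reproducing identity follows by setting $w_0 = 0$ in the formula $\langle f^{(n)}(w_0), \eta \rangle_\cle = \big\langle f, \frac{\partial^n ev_w^*}{\partial \bar{w}^n}\big|_{w=w_0}\eta \big\rangle_{\ke}$ derived above; after substituting $\frac{\partial^n ev_w^*}{\partial \bar{w}^n}\big|_{w=0}\eta = K_{n,\eta}$ this reads $\langle f^{(n)}(0), \eta \rangle_\cle = \langle f, K_{n,\eta} \rangle_{\ke}$, which is exactly the asserted relation. There is no genuine obstacle, since the analytic heavy lifting — the operator-norm analyticity of $z \mapsto ev_z$ and $w \mapsto ev_w^*$, the differentiation carried through the reproducing inner product, and the adjoint relation \eqref{adjoint} — has already been carried out; the only point requiring care is tracking the index $n$ and the base point $w_0 = 0$ consistently across the three formulas, so that the named functions $K_n$ and $K_{n,\eta}$ coincide with the appropriate derivative operators.
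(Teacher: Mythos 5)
Your proposal is correct and follows essentially the same route as the paper: the theorem there is stated as a summary of the preceding discussion (the operator-norm analyticity of $w \mapsto ev_w^*$, the identity $\frac{\partial^n ev_w^*}{\partial \bar{w}^n}\big|_{w=w_0}\eta = \big(\frac{\partial^n K}{\partial \bar{w}^n}(\cdot,w_0)\big)\eta$, and the differentiated reproducing formula), and your argument is exactly that discussion specialized to $w_0 = 0$, including the composition $ev_z \circ \frac{\partial^n ev_w^*}{\partial \bar{w}^n}\big|_{w=0}$ giving $K_n(z) \in \clb(\cle)$.
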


The following formulae for the inner product and norm are now
immediate:

\begin{corollary}\label{cor-normK}
In the setting of Theorem \ref{collect}, we have
\[
\langle K_{m,\zeta},K_{n,\eta} \rangle_{\ke}= \Big\langle
\Big(\frac{\partial^{n+m} K}{\partial z^n\partial \bar{w}^m} (0, 0)
\Big) \zeta, \eta \Big\rangle_{\cle},
\]
for all $m, n \in \mathbb{Z}_+$ and $\eta, \zeta \in \cle$. In
particular, we have
\[
\|K_{n,\eta} \|_{\ke}^2 = \Big\langle \Big(\frac{\partial^{2n}
K}{\partial z^n\partial \bar{w}^n} (0, 0) \Big) \eta, \eta
\Big\rangle_{\cle}.
\]
\end{corollary}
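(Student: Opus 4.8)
The plan is to apply the reproducing identity of Theorem \ref{collect} directly, taking the test function $f$ to be one of the two kernel functions $K_{m,\zeta}$, and then to identify the resulting derivative with the mixed partial of $K$.

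First I would fix $m, n \in \mathbb{Z}_+$ and $\eta, \zeta \in \cle$, and recall from Theorem \ref{collect} that $K_{m,\zeta} \in \ke$. Applying the reproducing formula of that theorem with $f = K_{m,\zeta}$ gives
\[
\langle K_{m,\zeta}, K_{n,\eta} \rangle_{\ke} = \langle K_{m,\zeta}^{(n)}(0), \eta \rangle_{\cle},
\]
where $K_{m,\zeta}^{(n)}$ denotes the $n$-th derivative in $z$ of the $\cle$-valued analytic function $z \mapsto K_{m,\zeta}(z)$. This reduces the problem to computing that single derivative at the origin.

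Next I would evaluate this derivative explicitly. By definition $K_{m,\zeta}(z) = \big(\frac{\partial^m K}{\partial \bar{w}^m}(z,0)\big)\zeta$, and the operator-valued map $z \mapsto \frac{\partial^m K}{\partial \bar{w}^m}(z,0)$ is analytic in $z$, since $K$ is analytic in its first variable and the $\bar{w}$-derivatives (realized via $\frac{\partial^m ev_w^*}{\partial \bar{w}^m}$ in Section 3) preserve this analyticity. Differentiating $n$ times in $z$ and setting $z=0$ therefore yields
\[
K_{m,\zeta}^{(n)}(0) = \Big(\frac{\partial^{n+m} K}{\partial z^n \partial \bar{w}^m}(0,0)\Big)\zeta.
\]
Substituting this back into the first display produces the claimed inner-product formula, and the ``in particular'' assertion follows at once by specializing to $m=n$ and $\zeta=\eta$, using $\|K_{n,\eta}\|_{\ke}^2 = \langle K_{n,\eta}, K_{n,\eta}\rangle_{\ke}$.

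The only step requiring care — and the mild obstacle — is justifying that the $n$-th strong derivative in $z$ of the $\cle$-valued map $K_{m,\zeta}$ coincides with applying $\partial^n/\partial z^n$ to the operator-valued map $\frac{\partial^m K}{\partial \bar{w}^m}(\cdot,0)$ and then evaluating on $\zeta$, i.e. the interchange of the scalar differentiation with the action on the vector $\zeta$. This is precisely the content of the norm-analyticity of $z \mapsto ev_z$ and its $z$-derivatives established earlier via the equivalence of strong and norm analyticity (Chapter 5, Theorem 1.2 of \cite{Taylor-Lay}); granting that, the argument is pure bookkeeping of the mixed partials and no genuine difficulty remains.
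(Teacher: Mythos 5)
Your proposal is correct and follows exactly the paper's own (one-line) proof: apply the reproducing identity of Theorem \ref{collect} with $f = K_{m,\zeta}$ and identify $K_{m,\zeta}^{(n)}(0)$ with $\big(\frac{\partial^{n+m} K}{\partial z^n \partial \bar{w}^m}(0,0)\big)\zeta$. Your additional care about interchanging the $z$-differentiation with the action on $\zeta$ is a sound elaboration of details the paper leaves implicit from its Section 3 discussion of analyticity.
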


\begin{proof}
By using the identity in Theorem \ref{collect} with $f=K_{m,\zeta}$,
the result follows.
\end{proof}

For example, in the particular case of a scalar-valued kernel $K$, we
have
\[
\displaystyle K(z,w)=\sum_{m, n \geq 0} a_{mn} z^m \bar{w}^n,
\]
for some (not necessarily bounded) infinite matrix $(a_{mn})$.
Then the inner product and norm expressions in Corollary
\ref{cor-normK} are given by $n!m!a_{m n}$ and $(n!)^2a_{n n}$,
respectively.

Theorem \ref{collect} and Corollary \ref{cor-normK} should be
compared with Lemmas 4.1 and 4.3 in \cite{CS} on generalized Bergman
kernels.

Now let $M_z$ denote the multiplication operator on $\ke$, that
is
\[
(M_z f)(w) = w f(w),
\]
for all $f \in \ke$ and $w \in \D$. If $M_z$ is bounded, $w \in \D$,
$\eta \in \cle$ and $f \in \ke$, then
\[
\begin{split}
\langle M_z^*(K(\cdot,w)\eta), f\rangle_{\ke} & = \langle K(\cdot,
w)\eta, z f \rangle_{\ke}
\\
& = \langle\eta, w f(w) \rangle_{\cle}
\\
& = \langle \bar{w}\eta,f(w)\rangle_{\cle}
\\
& = \langle\bar{w}K(\cdot, w) \eta, f \rangle_{\ke},
\end{split}
\]
that is
\begin{equation}\label{eigenvalue}
M_z^* \big(K(\cdot, w) \eta\big) = \bar{w} K(\cdot, w) \eta.
\end{equation}
In particular
\begin{equation}\label{eq-k0}
M_z^*(K(\cdot,0)\eta)=0,
\end{equation}
for all $\eta \in \cle$. Now using
\[
K(\cdot, w) \eta = \sum_{n=0}^\infty \frac{1}{n!} K_{n, \eta}
\bar{w}^n,
\]
it follows from \eqref{eigenvalue} that
\[
M_z^* (\sum_{n=0}^\infty \frac{1}{n!} K_{n, \eta} \bar{w}^n) =
\bar{w} \sum_{n=0}^\infty \frac{1}{n!} K_{n, \eta} \bar{w}^n,
\]
for all $w \in \D$ and $\eta \in \cle$. Equating the coefficients of
the same powers of $\bar{w}$ on either side, we immediately get the
following ``backward shift'' property of $M_z^*$:

\begin{theorem}\label{prop-Mzstar}
Let $\ke$ be an analytic reproducing kernel Hilbert space. If the
multiplication operator $M_z$ is bounded on $\ke$, then
\[
M_z^*(\frac{1}{n!} K_{n, \eta}) = \begin{cases} \frac{1}{(n-1)!}
K_{n-1, \eta} & \mbox{if}~ n \geq 1
\\
0 & \mbox{if}~n=0, \end{cases}
\]
for all $\eta \in \cle$.
\end{theorem}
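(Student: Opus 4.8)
The plan is to read off the stated recursion by matching coefficients in the $\ke$-valued power series identity that is already in place immediately before the statement. Fix $\eta \in \cle$. The starting point is the eigenvalue relation \eqref{eigenvalue}, namely $M_z^*(K(\cdot, w)\eta) = \bar{w}\, K(\cdot, w)\eta$, together with the expansion $K(\cdot, w)\eta = \sum_{n \geq 0} \frac{1}{n!} K_{n,\eta}\, \bar{w}^n$, which converges in the norm of $\ke$ for each fixed $w$. Since $M_z$ is assumed bounded, $M_z^*$ is a bounded, hence continuous, operator on $\ke$, so it may be applied term by term to this convergent series. This gives, for every $w \in \D$,
\[
\sum_{n \geq 0} \frac{1}{n!}\, M_z^*(K_{n,\eta})\, \bar{w}^n = \bar{w} \sum_{n \geq 0} \frac{1}{n!}\, K_{n,\eta}\, \bar{w}^n = \sum_{n \geq 1} \frac{1}{(n-1)!}\, K_{n-1,\eta}\, \bar{w}^n,
\]
after reindexing the right-hand side. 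Both sides are $\ke$-valued power series in $\bar{w}$ that agree for all $w \in \D$, so uniqueness of the coefficients of a convergent vector-valued power series lets me equate the coefficient of $\bar{w}^n$: the $\bar{w}^0$ term yields $M_z^*(K_{0,\eta}) = 0$ (which is also immediate from \eqref{eq-k0}), and for $n \geq 1$ the coefficient of $\bar{w}^n$ yields $\frac{1}{n!} M_z^*(K_{n,\eta}) = \frac{1}{(n-1)!} K_{n-1,\eta}$, which is exactly the claimed formula.

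The only genuine subtlety I anticipate is in justifying the two analytic manipulations above: passing the bounded operator $M_z^*$ through the infinite sum (immediate from continuity once one records that the series converges in the norm of $\ke$) and invoking uniqueness of coefficients for a power series in the conjugate variable $\bar{w}$ with coefficients in $\ke$. To make the argument fully self-contained and bypass these series considerations altogether, I would instead give a pairing proof against arbitrary $f \in \ke$ using the reproducing identity of Theorem \ref{collect}. For $n \geq 1$ the adjoint relation gives $\langle M_z^*(K_{n,\eta}), f \rangle_{\ke} = \langle K_{n,\eta}, M_z f \rangle_{\ke}$, and applying Theorem \ref{collect} to the function $M_z f$ together with the Leibniz computation $(M_z f)^{(n)}(0) = n\, f^{(n-1)}(0)$ (valid since $(M_z f)(w) = w f(w)$, so all but the single term $n f^{(n-1)}$ vanish at $0$) yields $\langle K_{n,\eta}, M_z f \rangle_{\ke} = n\, \langle K_{n-1,\eta}, f \rangle_{\ke}$, the conjugations arising in the two applications of Theorem \ref{collect} cancelling because $n$ is real.

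Since this holds for every $f \in \ke$, I conclude $M_z^*(K_{n,\eta}) = n\, K_{n-1,\eta}$ for $n \geq 1$, equivalently $M_z^*(\frac{1}{n!} K_{n,\eta}) = \frac{1}{(n-1)!} K_{n-1,\eta}$, while the case $n = 0$ follows at once from $(M_z f)(0) = 0$ (or from \eqref{eq-k0}). I would present the coefficient-matching computation as the conceptual explanation, since it is what the surrounding exposition is building toward, but rely on the pairing argument via Theorem \ref{collect} and the Leibniz rule to discharge the technical obstacle rigorously.
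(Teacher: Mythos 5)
Your proposal is correct, and its first half coincides with the paper's own proof: the paper applies $M_z^*$ to the expansion $K(\cdot,w)\eta=\sum_{n\geq 0}\frac{1}{n!}K_{n,\eta}\bar{w}^n$, invokes the eigenvalue relation \eqref{eigenvalue}, and ``equates coefficients of the same powers of $\bar{w}$'' --- exactly your coefficient-matching step, with the same implicit appeals to continuity of $M_z^*$ and to uniqueness of coefficients of a $\ke$-valued power series in $\bar{w}$ (the norm convergence you flag as a subtlety is in fact secured earlier in the paper, where $w\mapsto ev_w^*$ is shown to be co-analytic in the operator norm, so the paper's brevity is harmless). Where you genuinely depart from the paper is the pairing argument, which the paper does not give: testing against arbitrary $f\in\ke$ via $\langle M_z^*K_{n,\eta},f\rangle_{\ke}=\langle K_{n,\eta},M_zf\rangle_{\ke}$, applying Theorem \ref{collect} to the function $M_zf$, and using the Leibniz identity $(M_zf)^{(n)}(0)=n\,f^{(n-1)}(0)$; your observation that the two conjugations cancel because $n$ is real is the right point to check, and the computation is sound. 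That route buys a proof entirely free of series-convergence and coefficient-uniqueness considerations --- only Theorem \ref{collect}, the adjoint identity, and the Leibniz rule are used --- whereas the paper's route is shorter and makes the backward-shift structure visible at a glance but leaves the two analytic justifications to the reader. Both arguments are valid, and in both the $n=0$ case is immediate, either from \eqref{eq-k0} or from $(M_zf)(0)=0$.
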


\section{Dynamics of $M_z^*$}

In this section we present the main results on dynamics of $M_z^*$
on $\clh_{\cle}(K)$. Our first result concerns hypercyclicity and
topological mixing. The second result is about the chaoticity of
$M_z^*$.

We begin with the following definition: Let $I$ be a set, and let
$\{u_n\}$ be a sequence of complex-valued functions on $I$. We say
that $\displaystyle \liminf_n u_n(x)=0$ uniformly in $x\in I$, if
there exists a subsequence $\{n_k\}$ of natural numbers such that
$\displaystyle \lim_{k \raro \infty} u_{n_k}(x) = 0$ uniformly in
$x\in I$.

\begin{theorem}\label{Main}
Let $\cle$ be a Hilbert space, $\cle_0$ a total subset in $\cle$ and
let $\ke$ be an analytic reproducing kernel Hilbert space. If $M_z$
is bounded on $\ke$, then the following hold:

(1) If
\[
\liminf_{n}\left(\frac {1} {(n!)^2}\Big< \Big(
\frac{\partial^{2n} K}{\partial z^n\partial \bar{w}^n} (0,0) \Big)
\eta,\eta \Big>_{\cle}\right)=0,
\]
uniformly in $\eta\in \cle_0$, then $M_z^*$ is hypercyclic.

(2) If
\[
\lim_{n}\left(\frac {1} {(n!)^2}\Big< \Big(
\frac{\partial^{2n} K}{\partial z^n\partial \bar{w}^n} (0,0) \Big)
\eta,\eta \Big>_{\cle}\right)=0,
\]
for all $\eta\in \cle_0$, then $M_z^*$ is topologically mixing.
\end{theorem}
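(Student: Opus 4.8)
The plan is to apply the Hypercyclicity Criterion (Theorem \ref{thm-hypc}) to $T = M_z^*$, using the backward-shift action recorded in Theorem \ref{prop-Mzstar}. Writing $g_{n,\eta} := \frac{1}{n!}K_{n,\eta}$, Corollary \ref{cor-normK} identifies the quantity appearing in the hypotheses as $\|g_{n,\eta}\|_{\ke}^2$, while Theorem \ref{prop-Mzstar} gives $M_z^* g_{n,\eta} = g_{n-1,\eta}$ for $n\geq 1$ and $M_z^* g_{0,\eta} = 0$. I would take as dense set $D$ the linear span of $\{g_{n,\eta} : n\geq 0,\ \eta\in\cle_0\}$; its density follows from Theorem \ref{collect}, since $\langle f, g_{n,\eta}\rangle_{\ke} = \frac{1}{n!}\langle f^{(n)}(0),\eta\rangle_\cle$, so any $f\perp D$ has $f^{(n)}(0)=0$ for all $n$ (as $\cle_0$ is total in $\cle$), whence $f\equiv 0$ by analyticity.

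On this set, condition (i) of the criterion is automatic for any $n_k\raro\infty$, because $(M_z^*)^{n_k} g_{m,\eta} = g_{m-n_k,\eta}$ vanishes once $n_k > m$. The content is therefore condition (ii): given $f = \sum_i c_i g_{m_i,\eta_i}\in D$, the natural choice $f_k := \sum_i c_i g_{m_i + n_k,\eta_i}$ satisfies $(M_z^*)^{n_k} f_k = f$ exactly, so it remains only to arrange $\|g_{m+n_k,\eta}\|_{\ke}\raro 0$ for each fixed shift $m$ and each $\eta\in\cle_0$.

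The crucial observation is that boundedness of $M_z$ forces small norms to propagate to lower indices: from $\|g_{n-1,\eta}\| = \|M_z^* g_{n,\eta}\| \leq \|M_z\|\,\|g_{n,\eta}\|$ one obtains $\|g_{n-j,\eta}\| \leq C^{j}\|g_{n,\eta}\|$, where $C := \max(\|M_z\|,1)$. For part (2) this is not even needed: taking $n_k = k$, the hypothesis $\lim_n \|g_{n,\eta}\|^2 = 0$ makes $\|f_k\| = \|g_{m+k,\eta}\|\raro 0$ at once as the tail of a null sequence (only finitely many $\eta$ occur in a given $f$, so the pointwise hypothesis is enough), and the ``$n_k = k$'' clause of Theorem \ref{thm-hypc} then yields topological mixing.

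For part (1) the main obstacle — and the precise reason the \emph{uniform} liminf is imposed — is that the liminf supplies only sparse indices of small norm, whereas condition (ii) demands smallness of $\|g_{m+n_k,\eta}\|$ for every fixed $m$ along a single sequence $\{n_k\}$ valid simultaneously for all $\eta\in\cle_0$. I would resolve this by combining the uniform liminf with the backward propagation: first extract a subsequence $\{q_i\}$ with $\sup_{\eta\in\cle_0}\|g_{q_i,\eta}\|\raro 0$, then pick $p_k\in\{q_i\}$, rapidly increasing, with $\sup_{\eta\in\cle_0}\|g_{p_k,\eta}\| < C^{-k}/k$, and set $n_k := p_k - k$. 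For fixed $m$ and $k\geq m$, writing $m+n_k = p_k - (k-m)$ and applying the propagation bound gives $\|g_{m+n_k,\eta}\| \leq C^{k-m}\|g_{p_k,\eta}\| < C^{-m}/k \raro 0$, uniformly in $\eta\in\cle_0$, which verifies (ii) and establishes hypercyclicity. The heart of the argument is thus this one selection step, where boundedness of $M_z$ is exactly what upgrades the sparse ``liminf'' smallness into the runs of consecutive small norms required to feed the criterion.
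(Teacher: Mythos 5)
Your proposal is correct, and its skeleton coincides with the paper's: both apply the Hypercyclicity Criterion (Theorem \ref{thm-hypc}) to $M_z^*$ with the same dense span $D$ of the vectors $\frac{1}{n!}K_{n,\eta}$, the same exact solutions $f_k=\sum_i c_i\, g_{m_i+n_k,\eta_i}$ of $M_z^{*n_k}f_k=f$, and an identical treatment of part (2) with $n_k=k$. Where you genuinely diverge is the key step of part (1): upgrading the sparse indices supplied by the liminf into a single sequence $\{n_k\}$ along which $\|g_{m+n_k,\eta}\|\raro 0$ for every fixed $m$ and all $\eta\in\cle_0$. The paper does this qualitatively: from $\hat{K}_{t_k,\eta}\raro 0$ it deduces $\hat{K}_{t_k-j,\eta}=M_z^{*j}\hat{K}_{t_k,\eta}\raro 0$ for each $j$ by continuity of $M_z^{*j}$, and then invokes Lemma 4.2 of Grosse-Erdmann and Peris \cite{Erdmann-Peris} (if $y_{t_k-j}\raro y$ for each $j$, then some $\{n_k\}$ has $y_{n_k+j}\raro y$ for each $j$) to produce the required sequence. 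You instead make the backward propagation quantitative, $\|g_{n-j,\eta}\|\leq C^{j}\|g_{n,\eta}\|$ with $C=\max(\|M_z\|,1)$, and perform an explicit selection: choose $p_k$ from the liminf subsequence with $\sup_{\eta\in\cle_0}\|g_{p_k,\eta}\|<C^{-k}/k$ and set $n_k=p_k-k$, giving $\|g_{m+n_k,\eta}\|\leq C^{-m}/k$. The trade-off: the paper's route is shorter modulo the cited lemma, while yours is self-contained and, notably, handles the uniformity in $\eta$ more transparently --- the metric-space lemma, applied literally, produces a sequence $\{n_k\}$ per convergent sequence (hence per $\eta$), and it is exactly the uniform hypothesis that justifies a single common choice; your construction makes that single choice explicit, so it actually tightens a point the paper's write-up glosses over.
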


\begin{proof}
We apply Theorem \ref{thm-hypc}, the Hypercyclicity Criterion, to
$M_z^*$. For each $\eta \in \cle_0$ and $m \in \mathbb{Z}_+$, define
$\hat{K}_{m,\eta} \in \clh_{\cle}(K)$ (see Theorem \ref{collect}) by
\[
\hat{K}_{m, \eta} (z) = \frac{1} {m!}\Big(\frac{\partial^m
K}{\partial \bar{w}^m} (z, 0) \Big) \eta.
\]
Then the set $D$ is total in $\ke$, where
\begin{equation*}
\displaystyle D = \mbox{span} \{\hat{K}_{m,\eta}:~m \in
\mathbb{Z}_+, \eta \in \cle_0\}.
\end{equation*}
Indeed, if $g \in \ke$ is orthogonal to $D$, then
\[
\langle g,\hat{K}_{m,\eta}\rangle=0,
\]
and so, by Theorem \ref{collect}, we have
\begin{center}
$\displaystyle \langle g^{(m)}(0),\eta\rangle_\cle=0$,
\end{center}
for all $m \in \mathbb{Z}_+$ and $\eta \in \cle_0$. Since the span of $\cle_0$ is dense in $\cle$, it follows that
\[
g^{(m)}(0)=0,
\]
for all $m \in \mathbb{Z}_+$ and hence
\[
g\equiv 0.
\]
Now note that $\{\hat{K}_{m,\eta}:~m \in \mathbb{Z}_+, \eta \in
\cle_0\}$ is a generating set for $D$. If $\zeta \in \cle$,  $s > t$
and $s, t \in \mathbb{Z}_+$, then by \eqref{eq-k0} and Theorem
\ref{prop-Mzstar}, it follows that
\begin{equation}\label{eq-starzero}
M_z^{* s} \hat{K}_{t, \zeta} = 0.
\end{equation}
Let
\begin{equation*}
f = \sum_{j=1}^p\alpha_j \hat{K}_{m_j,\eta_j}\in D,
\end{equation*}
for some positive integer $p$, $\alpha_j\in \mathbb{C}$, $m_j\in
\mathbb{Z}_+$ and $\eta_j\in \cle_0$, $1\leq j\leq p$. Then
\eqref{eq-starzero} implies, in particular, that
\[
M_z^{*n} f \raro 0,
\]
as $n \raro \infty$. This proves the first condition of the
criterion.

\noindent To prove the second condition of the criterion, let
\[
f=\sum_{j=1}^p\alpha_j \hat{K}_{m_j,\eta_j}\in D,
\]
and first define a sequence $\{g_n\} \subseteq D$ by
\begin{equation*}
g_n:=\sum_{j=1}^p\alpha_j \hat{K}_{m_j+n,\eta_j},
\end{equation*}
for all $n \in \mathbb{N}$. By Theorem \ref{prop-Mzstar}, it follows
that
\[
M_z^{*n} (\hat{K}_{m_j+n, \eta_j}) = \hat{K}_{m_j,\eta_j},
\]
for $j = 1, \ldots, p$, and hence
\begin{equation}\label{eq-prf1}
M_z^{*n} g_n= f,
\end{equation}
for $n \in \mathbb{N}$. Since
\[
\|\hat{K}_{m, \eta}\|_{\ke}^2 = \frac {1} {(m!)^2}\Big< \Big(
\frac{\partial^{2m} K}{\partial z^m\partial \bar{w}^m} (0,0) \Big)
\eta,\eta \Big>_{\cle},
\]
for all $m \geq 0$, by Corollary \ref{cor-normK}, and
\[
\liminf_{n}\left(\frac {1} {(n!)^2}\Big< \Big( \frac{\partial^{2n}
K}{\partial z^n\partial \bar{w}^n} (0,0) \Big) \eta,\eta
\Big>_{\cle}\right)=0,
\]
uniformly in $\eta\in \cle_0$ by assumption, it follows that there
exists a sequence of natural numbers $\{t_k\}$ such that
\[
\hat{K}_{{t_k},\eta}\rightarrow 0,
\]
in $\ke$, for all $\eta \in \cle_0$. This implies that
\[
M_z^{*j} (\hat{K}_{{t_k},\eta}) \raro 0,
\]
as $k \raro \infty$ and for all $j \geq 0$. Now observe that
\[
\hat{K}_{{t_k-j},\eta} = M_z^{*j} (\hat{K}_{{t_k},\eta}),
\]
for all $t_k \geq j$. Hence
\[
\hat{K}_{{t_k-j},\eta} \rightarrow 0,
\]
as $k\rightarrow \infty$ and for all $j \geq 0$.

\noindent We now use the following lemma (see Lemma 4.2, page 90,
\cite{Erdmann-Peris}): Let $(X, d)$ be a metric space and let
$\{y_k\}$ be a sequence in $X$. Suppose that $\{t_k\}$ is a sequence
of natural numbers. If the subsequence $\{y_{t_k-j}\}$ converges to
a fixed element $y$ for each $j$, then there exists $\{n_k\}$ such
that $\{y_{n_k+j}\}$ converges to $y$ for each $j$.

\noindent This shows that there exists a strictly increasing
sequence of natural numbers $\{n_k\}_k$ such that
\[
 \hat{K}_{n_k+j,\eta} \rightarrow 0,
\]
for each $j$ and for all $\eta \in \cle_0$. Set
\[
f_k: = g_{n_k},
\]
that is
\[
f_k= \sum_{j=1}^{p}\alpha_j \hat{K}_{n_k + m_j,\eta_j},
\]
for all $k \geq 1$. Clearly
\[
f_{k} \raro 0,
\]
as $k \raro \infty$. Moreover, by \eqref{eq-prf1} we have
\[
M_z^{*n_k} f_{k}= f,
\]
for all $k \geq 1$. Hence $M_z^*$ satisfies the Hypercyclicity
Criterion with respect to $\{n_k\}$. This proves (1).

\noindent For the second part, we proceed with $D$ and $f$ as in the
proof of part (1) above. In this case, however, by assumption, it
follows that
\[
\hat{K}_{n, \eta} \raro 0,
\]
as $n \raro \infty$ and for all $\eta \in \cle_0$. If we set
\[
f_n = \sum_{j=1}^{p}\alpha_j \hat{K}_{n + m_j,\eta_j},
\]
for all $n \geq 1$, then, as in the proof of part (1), it follows
that
\[
f_{n} \raro 0,
\]
as $n \raro \infty$, and
\[
M_z^{*n} f_{n}= f,
\]
for all $n \geq 1$. This concludes the proof.
\end{proof}

We now proceed to the chaos of $M_z^*$ on $\ke$. A double series
$\displaystyle \sum_{i,j\geq 0} u_{i,j}$ in a Hilbert space $\clh$
is called $\mathcal{F}$-summable if for $\epsilon>0$ there exists
$N\in \mathbb{N}$ such that
\[
\|\sum_{i,j\in F} u_{i,j}\|<\epsilon,
\]
for all finite sets $F\subset \{N,N+1,\cdots\}$.

\begin{theorem}\label{chaos-Mzstar}
Let $\cle$ be a Hilbert space, $\cle_0$ a total subset in $\cle$ and
let $\ke$ be an analytic reproducing kernel Hilbert space. If $M_z$
is bounded on $\ke$ and the double series
\[
\sum_{n,m\geq 0}  \frac {1} {n!~m!}\Big\langle \Big(\frac{\partial^{n+m}
K}{\partial z^n\partial \bar{w}^m} (0,0) \Big) \eta,\eta
\Big\rangle_{\cle},
\]
is $\mathcal{F}$-summable for all $\eta \in \cle_0$, then $M_z^*$
chaotic.
\end{theorem}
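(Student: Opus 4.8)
The plan is to verify the hypotheses of the Chaoticity Criterion (Theorem \ref{chaos}) for $T = M_z^*$ on the same dense set used in Theorem \ref{Main}, namely $D = \mathrm{span}\{\hat{K}_{m,\eta} : m \in \mathbb{Z}_+,\ \eta \in \cle_0\}$, which was shown there to be total (hence dense) in $\ke$. The structural fact that drives everything is that $M_z^*$ acts on the generators as a backward shift: by Theorem \ref{prop-Mzstar}, $M_z^* \hat{K}_{m,\eta} = \hat{K}_{m-1,\eta}$ for $m \geq 1$ and $M_z^* \hat{K}_{0,\eta} = 0$. This is exactly the behavior that lets one build, for each $x \in D$, the ``forward'' sequence $\{u_k\}$ demanded by the criterion.

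The heart of the argument is to recognize the $\mathcal{F}$-summability hypothesis as an unconditional convergence statement in $\ke$. By Corollary \ref{cor-normK},
\[
\langle \hat{K}_{m,\eta}, \hat{K}_{n,\eta}\rangle_{\ke} = \frac{1}{n!\,m!}\Big\langle \Big(\frac{\partial^{n+m} K}{\partial z^n \partial \bar{w}^m}(0,0)\Big)\eta,\eta\Big\rangle_{\cle},
\]
so that for any finite set $F$,
\[
\Big\|\sum_{m \in F} \hat{K}_{m,\eta}\Big\|_{\ke}^2 = \sum_{n,m \in F} \langle \hat{K}_{m,\eta}, \hat{K}_{n,\eta}\rangle_{\ke} = \sum_{n,m \in F} \frac{1}{n!\,m!}\Big\langle \Big(\frac{\partial^{n+m} K}{\partial z^n \partial \bar{w}^m}(0,0)\Big)\eta,\eta\Big\rangle_{\cle}.
\]
Thus the $\mathcal{F}$-summability of the double series in the statement is precisely the Cauchy criterion (recalled in Section \ref{Sec-P}) for the unconditional convergence of $\sum_{m\geq 0}\hat{K}_{m,\eta}$ in $\ke$. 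I would conclude from the hypothesis that this series converges unconditionally for every $\eta \in \cle_0$.

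Given $x = \sum_{j=1}^p \alpha_j \hat{K}_{m_j,\eta_j} \in D$, I would set $u_k = \sum_{j=1}^p \alpha_j \hat{K}_{m_j+k,\eta_j}$, so that $u_0 = x$. The backward-shift property together with linearity yields $M_z^{*n} u_k = u_{k-n}$ for all $k \geq n$, since then $m_j + k - n \geq 0$ for every $j$. The series $\sum_{n\geq 0} M_z^{*n} x$ is a finite sum, because $M_z^{*n}\hat{K}_{m_j,\eta_j}$ vanishes once $n > m_j$, hence is trivially unconditionally convergent. For $\sum_{n\geq 0} u_n$ I would estimate, for finite $F$,
\[
\Big\|\sum_{n \in F} u_n\Big\|_{\ke} \leq \sum_{j=1}^p |\alpha_j|\,\Big\|\sum_{n \in F} \hat{K}_{m_j+n,\eta_j}\Big\|_{\ke},
\]
and invoke the unconditional convergence of each $\sum_{l} \hat{K}_{l,\eta_j}$ (a shifted tail, and a finite linear combination, of unconditionally convergent series is again unconditionally convergent). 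With all the hypotheses of Theorem \ref{chaos} in hand, $M_z^*$ is chaotic.

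The only genuinely delicate step is the identification in the second paragraph: once one observes that the block sum over $F \times F$ collapses to a squared norm, the rest is bookkeeping with the backward-shift relation and the stability of unconditional convergence under finite linear combinations and tails. I expect no further obstacle beyond being careful that the index shift $m_j + k - n$ stays nonnegative in the regime $k \geq n$.
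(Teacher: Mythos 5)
Your proposal is correct and follows essentially the same route as the paper's own proof: the same dense set $D$, the same choice $u_k = \sum_j \alpha_j \hat{K}_{m_j+k,\eta_j}$, the same use of the backward-shift relation from Theorem \ref{prop-Mzstar}, and the same key identification (via Corollary \ref{cor-normK}) of the $\mathcal{F}$-summability hypothesis with the Cauchy criterion for unconditional convergence of $\sum_n \hat{K}_{n,\eta}$. The only cosmetic difference is that you make the reduction to generators explicit through a triangle-inequality estimate, whereas the paper simply notes it suffices to check the case $f = \hat{K}_{m,\eta}$.
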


\begin{proof}
Here we apply Theorem \ref{chaos}, the Chaoticity Criterion, and
proceed with the set $D$ as defined in the proof of Theorem
\ref{Main}. Assume that
\begin{equation*}
f:=\sum_{j=1}^p\alpha_j \hat{K}_{m_j,\eta_j}\in D,
\end{equation*}
and choose
\[
 u_k:=\sum_{j=1}^{p}\alpha_j \hat{K}_{k+m_j,\eta_j} \quad \quad (k\geq
 0).
\]
From the proof of Theorem \ref{Main} it follows that for $k\geq n$,
\[
M_z^{*n} u_k =\sum_{j=1}^{p}\alpha_j \hat{K}_{k-n+m_j,\eta_j}=u_{k-n}.
\]
\noindent Now using \eqref{eq-starzero} we obtain that the series
\[
\sum_{n\geq 0} M_z^{*n} \hat{K}_{m,\eta},
\]
is unconditionally convergent in $\clh_{\cle}(K)$ for each $m \in
\mathbb{Z}_+$ and $\eta \in \cle_0$, and so
\[
\sum_{n\geq 0} M_z^{*n} f,
\]
is also unconditionally convergent in $\clh_{\cle}(K)$ for all $f
\in D$. Now, we verify that the series $\displaystyle \sum_{n\geq 0}
u_n$ is unconditionally convergent. It is, however, enough to check
the above for
\[
f = \hat{K}_{m, \eta},
\]
where $m \in \mathbb{Z}_+$ and $\eta \in \cle_0$, that is, we need
to verify that the series
\[
\sum_{n \geq 0}\hat{K}_{m + n,\eta}
\]
is unconditionally convergent in $\ke$. This is equivalent to
showing that
\[
\sum_{n \geq 0}\hat{K}_{ n,\eta},
\]
is unconditionally convergent for all $\eta \in \cle_0$. Indeed, for
$\epsilon>0$ and $\eta \in \cle_0$, by hypothesis, there exists $N
\in \mathbb{N}$ such that
\begin{equation*}
\sum_{n,m \in F} \frac {1} {n!~m!} \Big\langle \Big(
\frac{\partial^{n+m} K}{\partial z^n\partial \bar{w}^m} (0,0) \Big)
\eta,\eta \Big\rangle_{\cle} <\epsilon,
\end{equation*}
for all finite sets $F\subset [N,\infty)\cap \mathbb{N}$. Then
Corollary \ref{cor-normK} implies that
\begin{equation*}
\Big \langle \sum_{n\in F}\hat{K}_{n,\eta},\sum_{m\in F} \hat{K}_{m,\eta}
\Big \rangle<\epsilon,
\end{equation*}
that is
\[
\left\|\sum_{n\in F} \hat{K}_{n,\eta}\right\|_{\ke}<\sqrt{\epsilon}.
\]
Hence the series $\sum_{n\geq 0} \hat{K}_{n,\eta}$ is
unconditionally convergent in $\ke$ (see the discussion preceding
Theorem \ref{chaos}). This completes the proof of the theorem.
\end{proof}

Of particular interest is the case where $K$ is a scalar-valued
kernel. Here the sufficient condition for dynamics of $M_z^*$
involves the diagonal of the kernel function. We will address this
issue in Theorem \ref{Main-sub}.

The converse of Theorem \ref{Main} is not true in general (see the
example in Subsection \ref{example-1} and also Section
\ref{sect-CR}).

\section{Examples and Applications}

In this section we give concrete examples and applications of our
main results.

\subsection{Weighted shifts}\label{example-1}

We begin by noting that the weighted shift space $H^2(\beta)$, as
introduced in Section 2, is an analytic reproducing kernel Hilbert
space. In this case, the scalar-valued reproducing kernel function
is given by
\[
K(z,w)=\sum_{n\geq 0} \beta_n^2 z^n\overline{w}^n,
\]
for all $z, w \in \D$. Moreover
\[
\frac {\partial^{n+m}K}{\partial z^n\bar{w}^m} (0,0) =
\begin{cases} (n!)^2
\beta_n^2 & \mbox{if}~ m = n
\\
0 & \mbox{if}~m \neq n, \end{cases}
\]
and $m, n \geq 0$. By Theorem \ref{Main}, it follows that $M_z^*$ is
hypercyclic on $H^2(\beta)$ if
\[
\liminf_n \beta_n = 0.
\]
This condition is also necessary for hypercyclicity  of $M_z^*$ on
$H^2(\beta)$ (see Salas \cite{Salas}) but not in general on
$\clh_{\cle}(K)$ (see the Subsection \ref{example-1}). A similar
classification result also holds for chaoticity and mixing for
$M^*_z$ on $H^2(\beta)$ (cf. \cite{Erdmann-Peris}).

\subsection{Quasi-scalar reproducing kernel Hilbert
spaces}\label{sub-qs}

We say that a kernel function $K : \D \times \D
\raro \clb(\cle)$ is a quasi-scalar kernel if there exists a
scalar-valued analytic kernel $k : \D \times \D \raro \mathbb{C}$
such that
\[
K(z,w) = k(z,w) I_{\cle},
\]
for all $z, w \in \D$. In this case, the general construction of
reproducing kernel Hilbert spaces yields that (for instance, see
\cite{Kumari-etal})
\[
\clh_{\cle}(K) \cong \clh(k) \otimes \cle,
\]
where $\clh(k)$ is the reproducing kernel Hilbert space
corresponding to the kernel function $k$ on $\D$. Quasi-scalar
reproducing kernel Hilbert spaces play important roles in function
theory and operator theory, particularly in the study of dilation
theory and analytic model theory (cf. \cite{Kumari-etal}).

As a simple example of the use of Theorems \ref{Main} and
\ref{chaos-Mzstar}, we prove the following:

\begin{theorem} \label{Main-sub}
If $\clh_K(\cle)$ is a quasi-scalar reproducing kernel Hilbert space
and $M_z$ on $\clh_K(\cle)$ is bounded, then:

\noindent (1) $M_z^*$ is hypercyclic  if
\[
\displaystyle \liminf_{n} \frac {1} {(n!)^2}\frac{\partial^{2n}
k}{\partial z^n\partial \bar{w}^n}(0,0)=0.
\]
(2) $M_z^*$ is topologically mixing if
\[
\displaystyle \lim_n \frac {1} {(n!)^2}\frac{\partial^{2n}
k}{\partial z^n\partial \bar{w}^n}(0,0)=0.
\]
(3) $M_z^*$ is chaotic if the series
\[
\sum_{n,m\geq 0}  \frac {1} {n!~m!}\frac{\partial^{n+m}
k}{\partial z^n\partial \bar{w}^m}(0,0),
\]
is $\mathcal{F}$-summable.
\end{theorem}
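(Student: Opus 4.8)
The plan is to specialize the three general results—Theorem \ref{Main} parts (1) and (2) and Theorem \ref{chaos-Mzstar}—to the quasi-scalar setting, where $K(z,w) = k(z,w) I_{\cle}$. The key observation is that all the hypotheses in the general theorems are stated \emph{uniformly in $\eta \in \cle_0$}, and for a quasi-scalar kernel these quantities decouple cleanly into a scalar factor coming from $k$ and a factor depending only on $\|\eta\|_{\cle}$. So the first step is to compute, directly from $K(z,w) = k(z,w) I_{\cle}$, that
\[
\frac{\partial^{n+m} K}{\partial z^n \partial \bar{w}^m}(0,0)
= \left(\frac{\partial^{n+m} k}{\partial z^n \partial \bar{w}^m}(0,0)\right) I_{\cle},
\]
since differentiation acts only on the scalar $k$ and leaves $I_{\cle}$ untouched. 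Consequently, for any $\eta \in \cle$,
\[
\Big\langle \Big(\frac{\partial^{n+m} K}{\partial z^n \partial \bar{w}^m}(0,0)\Big)\eta, \eta \Big\rangle_{\cle}
= \frac{\partial^{n+m} k}{\partial z^n \partial \bar{w}^m}(0,0)\, \|\eta\|_{\cle}^2.
\]

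Next I would fix a convenient total subset $\cle_0$ of $\cle$ on which the uniformity hypotheses become trivial to verify. The natural choice is to take $\cle_0$ to be any orthonormal basis of $\cle$, or more simply the set of unit vectors, so that $\|\eta\|_{\cle}^2 = 1$ for every $\eta \in \cle_0$; such a set is certainly total. With this choice the scalar factor $\|\eta\|_{\cle}^2$ is identically $1$ and drops out entirely. Then for part (1), the hypothesis
\[
\liminf_n \frac{1}{(n!)^2}\frac{\partial^{2n} k}{\partial z^n \partial \bar{w}^n}(0,0) = 0
\]
immediately yields that
\[
\liminf_n \frac{1}{(n!)^2}\Big\langle \Big(\frac{\partial^{2n} K}{\partial z^n \partial \bar{w}^n}(0,0)\Big)\eta, \eta \Big\rangle_{\cle} = 0
\]
uniformly in $\eta \in \cle_0$ (the uniformity is automatic because the expression no longer depends on $\eta$), so Theorem \ref{Main}(1) applies and gives hypercyclicity. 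Part (2) follows in exactly the same way using Theorem \ref{Main}(2), replacing $\liminf$ by $\lim$. For part (3), the same substitution shows that the scalar double series $\sum_{n,m} \frac{1}{n!\,m!}\frac{\partial^{n+m} k}{\partial z^n \partial \bar{w}^m}(0,0)$ coincides termwise (after multiplying by $\|\eta\|_{\cle}^2 = 1$) with the operator double series appearing in Theorem \ref{chaos-Mzstar}, so its $\mathcal{F}$-summability is precisely the hypothesis of that theorem, and chaoticity follows.

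I do not expect any genuine obstacle here, since this is essentially a verification that the general hypotheses collapse to their scalar counterparts once the quasi-scalar structure is used. The only point requiring a moment's care is the interchange of the derivative with the constant-operator factor $I_{\cle}$, which is justified because $I_{\cle}$ is independent of $z$ and $\bar{w}$; and the legitimacy of choosing $\cle_0$ to consist of unit vectors, which is harmless since the defining conditions of Theorems \ref{Main} and \ref{chaos-Mzstar} are required only on a total subset. The entire proof is thus a short reduction, and I would present it as three brief applications of the already-established general theorems.
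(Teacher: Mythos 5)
Your proposal is correct and follows essentially the same route as the paper: reduce the quasi-scalar case to the identity $\big\langle \big(\tfrac{\partial^{n+m} K}{\partial z^n \partial \bar{w}^m}(0,0)\big)\eta, \eta \big\rangle_{\cle} = \tfrac{\partial^{n+m} k}{\partial z^n \partial \bar{w}^m}(0,0)\,\|\eta\|_{\cle}^2$ and then invoke Theorems \ref{Main} and \ref{chaos-Mzstar}. In fact you are slightly more careful than the paper's own (very terse) proof, since you make explicit the choice of the total set $\cle_0$ as unit vectors, which is exactly what makes the uniformity hypothesis of Theorem \ref{Main}(1) automatic.
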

\begin{proof}
If $k : \D \times \D \raro \mathbb{C}$ is an analytic kernel on $\D$
and
\[
K(z,w) = k(z,w) I_{\cle},
\]
for all $z, w \in \D$, then
\[
\|K(\cdot, w)\eta\|_{\ke}=\|k(\cdot, w)\|_{\clh(k)} \|\eta\|_\cle,
\]
for all $\eta \in \cle$. Observe now that
\[
\Big\langle \Big( \frac{\partial^{{n+m}} K}{\partial z^n \partial
\bar{w}^m} (0,0) \Big) \eta,\eta \Big\rangle_{\cle} = \Big(
\frac{\partial^{n+m} k}{\partial z^n\partial \bar{w}^m} (0,0) \Big)
\|\eta\|_{\cle}^2,
\]
for all $w \in \D$, $\eta \in \cle$ and $n,m \geq 0$. The result now
follows from Theorems \ref{Main} and \ref{chaos-Mzstar}.
\end{proof}

It is worth pointing out that the conclusion of the above theorem is
independent of the choice of the Hilbert space $\cle$. More
specifically, if $k$ is a scalar kernel and
\[
\displaystyle \liminf_{n} \frac {1} {(n!)^2}\frac{\partial^{2n}
k}{\partial z^n\partial \bar{w}^n}(0,0)=0,
\]
then $M_z^*$ on $\clh_{\cle}(K)$ is hypercyclic where
\[
K(z, w) = k(z, w) I_{\cle},
\]
for all $z, w \in \D$ and $\cle$ is a Hilbert space. This
observation also should be compared with the linear dynamics of
tensor products of operators (cf. \cite{Bonet-etal} and
\cite{Gimenez-Peris}).

As a concrete application, consider the $\cle$-valued Dirichlet
space $\mathcal{D}_\cle$ on $\mathbb{D}$, where $\cle$ is separable
Hilbert space. Notice that the kernel function for $\cld_{\cle}$ is
given by
\[
(z, w) \mapsto \Big(\sum_{n\geq 0} \frac{z^n\bar{w}^n}{n+1} \Big)
I_\cle.
\]
From the above theorem, it then follows that $M_z^*$ is mixing on
$\mathcal{D}_\cle$.

\subsection{A counter-example}\label{example-1}

Here we present a counterexample to show that the sufficient
condition in Theorem \ref{Main-sub} for $M_z^*$ to be hypercyclic is
not a necessary condition.

\noindent Consider the Hilbert space $H^2(\beta) \subseteq
\clo(\D)$, as in Section \ref{Sec-P}, corresponding to the
(diagonal) kernel
\[
k(z, w) = \sum_{n \geq 0}\beta_n^2 z^n \bar{w}^n \quad \quad (z, w
\in \D),
\]
where $\{\beta_n\}$ is a sequence of positive real numbers and
\[
\displaystyle \limsup_{n} \frac{\beta_{n+1}}{\beta_n} \leq 1.
\]
Suppose that $M_{z, k}$, the multiplication operator by the
coordinate function $z$, on $H^2(\beta)$ is bounded. Let
\[
\theta(z)=\frac {1} {1-z},
\]
for all $z \in \D$, and set
\[
\mathcal{H} = \{\theta f : f\in H^2(\beta)\}.
\]
Then $\clh$ is a Hilbert space with the inner product
\[
\langle \theta f,\theta g \rangle_\mathcal{H}: = \langle
f,g\rangle_{H^2(\beta)},
\]
for all $f, g \in H^2(\beta)$. Moreover, $\clh$ is an analytic
reproducing kernel Hilbert space corresponding to the kernel
\[
k_{\theta}(z, w) = \theta(z)\left(\sum_{n\geq 0} {\beta_n}^2 z^n\bar{w}^n \right)\overline{\theta(w)},
\]
for all $z, w \in \D$. Here the reproducing property is given by
\[
\langle \theta f, k_{\theta}(\cdot,w)\rangle_\mathcal{H} = \langle
\theta f,\overline{\theta(w)} \theta k(\cdot,w)
\rangle_\mathcal{H}=\theta(w) \langle f,k(\cdot,w)
\rangle_{H^2(\beta)} =\theta(w)f(w),
\]
for all $f \in H^2(\beta)$ and $w \in \D$. Since
\[
\{\beta_n  z^n\}_{n \geq 0},
\]
is an orthonormal basis in $H^2(\beta)$, it follows that
\[
\{\beta_n \theta z^n\}_{n \geq 0},
\]
forms an orthonormal basis in $\mathcal{H}$. Also observe that the
multiplication operator $M_z$ on $\mathcal{H}$ is a bounded
operator. Moreover, it follows that $M_z$ on $\clh$ and $M_{z,k}$ on
$H^2(\beta)$ are unitarily equivalent and hence so are $M_z^*$ and
$M_{z,k}^*$. Since the condition
\[
\liminf_n \beta_n = 0,
\]
is necessary and sufficient for $M_{z,k}^*$ to be hypercyclic on
$H^2(\beta)$ \cite{Salas}, under this assumption we conclude that
$M_z^*$ is also hypercyclic on $\clh$. However, the sufficient
condition for hypercyclicity in Theorem \ref{Main-sub} (and hence,
that of Theorem \ref{Main}) is not satisfied for $M_z^*$ on $\clh$.
Indeed, observe that
\[
k_{\theta}(z, w) = (1+z+z^2+\cdots)\left(\beta_0^2+\beta_1^2z
\bar{w} + \beta_2^2 z^2 \bar{w}^2 + \cdots\right) (1+\bar{w} +
\bar{w}^2 + \cdots),
\]
for all $z, w \in \mathbb{D}$. Then $a_{nn}$, the coefficient of
$z^n\bar{w}^n$ in the above expansion, is given by
\[
a_{n n}=\beta_0^2 + \beta_1^2 +\cdots + \beta_n^2,
\]
for all $n \geq 0$, and so
\[
\liminf_n a_{n n} \neq 0.
\]
On the other hand
\[
a_{n n} = \frac {1} {(n!)^2}\frac{\partial^{2n} k_{\theta}}{\partial
z^n\partial \bar{w}^n}(0,0)
\]
for all $n \geq 0$. This shows that the sufficient condition for
hypercyclicity in Theorem \ref{Main} is not necessary.

The construction above also allows us to work with non-zero
polynomials instead of $\theta$. However, in this case, the
sufficient condition in Theorem \ref{Main-sub} for $M_z^*$ to be
hypercyclic is also a necessary condition (see Section
\ref{sect-CR}).

\section{Necessary conditions and Concluding remarks}\label{sect-CR}

In the setting of analytic reproducing kernel Hilbert spaces many
fundamental and basic questions about dynamics remain unanswered.
For instance, the converse of Theorem \ref{Main} is false in general
(see the example in Subsection \ref{example-1}). However, we can
prove the converse for analytic tridiagonal reproducing kernel
Hilbert spaces.

Choose two sequences of non-zero complex numbers $\mu=\{\mu_n\}$ and
$\nu=\{\nu_n\}$ such that $\{e_n\}_{n \geq 0}$ is an orthonormal
basis of a reproducing kernel Hilbert space (of analytic functions
on $\mathbb{D}$) $\mathcal{H}_{{\mu},{\nu}}$ (see Theorem 1 in
\cite{Adams-McGuire}), where
\[
e_n(z)=\mu_n z^n+\nu_nz^{n+1} \quad \quad \quad  ~~(z \in \D, n\geq
0).
\]
Since $\displaystyle k_{\mu,\nu}(z,w)=\sum_{n\geq 0}
e_n(z)\overline{e_n(w)}$, we get the \textit{tridiagonal kernel} as
(cf. \cite{Adams-McGuire})
\[
 k_{\mu,\nu}(z,w) =
|\mu_0|^2+\sum_{n\geq 1}
(|\mu_n|^2+|\nu_{n-1}|^2)z^n\bar{w}^n+\sum_{n\geq 0}\mu_n\bar{\nu_n}
z^{n}\bar{w}^{n+1}+\sum_{n\geq 0}\bar{\mu_n} \nu_n
z^{n+1}\bar{w}^{n},
\]
for all $z, w \in \D$. The analytic function Hilbert space
$\mathcal{H}_{{\mu},{\nu}}$ is called \textit{tridiagonal space}
(see Adams and McGuire \cite{Adams-McGuire} and Adams, McGuire and
Paulsen \cite{VAM}).

\noindent Further, suppose that
\begin{equation}\label{weights}
\sup_{n\geq 0} |\mu_n/\mu_{n+1}|<\infty ~~,~~\sup_{n\geq 0} |\nu_n/
\mu_{n+1}|<1.
\end{equation}
Then Theorem 5 in \cite{Adams-McGuire} ensures that $M_z$ is bounded
on $\mathcal{H}_{{\mu},{\nu}}$. Note also that, as pointed out in
Theorem 4 in \cite{Adams-McGuire}, forward weighted shifts are not
unitarily equivalent to $M_z$ on $\mathcal{H}_{{\mu},{\nu}}$, in general.

Now let $M_z^*$ be hypercyclic, and let $f: = k( \cdot,0)$. Notice
that
\[
f(z) = a+ b z,
\]
where $a =|\mu_0|^2$ and $b = \mu_0\overline{\nu_0}$. Fix $n \geq 0$
and set
\[
z^n f = \sum_{j\geq 0} \alpha_{j} e_j,
\]
that is
\[
az^n + b z^{n+1} = \sum_{j\geq 0} \alpha_{j} (\mu_j z^j+\nu_j
z^{j+1}),
\]
for some $\alpha_j \in \mathbb{C}$. Comparing coefficients of like
powers, we have
\[
\alpha_{j}= 0, \quad  \alpha_{n}=\frac {a} {\mu_n}, \quad
\alpha_{n+1}=\frac {b} {\mu_{n+1}}-\frac{a} {\mu_n}\frac
{\nu_n}{\mu_{n+1}},
\]
for all $j<n$, and
\[
\alpha_{n+1+k} = - \frac{\nu_{n+k}}{\mu_{n+1 + k}} \alpha_{n+ k},
\]
for all $k \geq 1$. Set $M = \max \{a,|b|\}$. Since $\displaystyle R
:= \sup_m \frac{|\nu_m|}{|\mu_{m+1}|} < 1$, it follows that
\[
|\alpha_{n+1}|^2 \leq \big(\frac {1} {|\mu_{n+1}|}+\frac
{1}{|\mu_n|} \big)^2 M^2,
\]
and so
\[
|\alpha_{n+1+k}|^2 \leq \big(\frac {1} {|\mu_{n+1}|}+\frac
{1}{|\mu_n|} \big)^2 M^2 R^{2k},
\]
for all $k \geq 1$. Since
\[
\|z^n f\|^2_{\mathcal{H}_{{\mu},{\nu}}} = \sum_{j\geq 0}
|\alpha_{j}|^2,
\]
we have
\begin{align*}
\|z^n f\|^2_{\mathcal{H}_{{\mu},{\nu}}} & \leq M^2 \Big (\frac {1}
{|\mu_n|^2} + \big(\frac {1} {|\mu_{n+1}|} + \frac {1}{|\mu_n|}
\big)^2 + \big(\frac {1}{|\mu_{n+1}|} + \frac
{1}{|\mu_n|}\big)^2 R^2
\\
& \phantom{=} + \big(\frac {1}{|\mu_{n+1}|} + \frac {1}{|\mu_n|}
\big)^2 R^4 + \cdots \Big)\\&\leq M^2 \Big (\frac {1} {|\mu_n|^2} +
\frac {(r+1)^2} {|\mu_n|^2}\sum_{k\geq 1} {R}^{2k}\Big)\\&\leq C
\frac {1} {|\mu_n|^2},
\end{align*}
where $\displaystyle r=\sup_n (|\mu_n|/|\mu_{n+1}|)$ and $\displaystyle C=M^2\big(1+(r+1)^2 \sum_k R^{2k}\big)$. Now, if possible, let
\[
\liminf_{n} |\mu_n| > 0.
\]
Then the sequence $\{\|z^nf\|_{{\mathcal{H}_{{\mu},{\nu}}}}\}_{n\geq
0}$ is bounded. Now let $g$ be a non-zero vector in
$\mathcal{H}_{{\mu},{\nu}}$. Since
\[
\langle M_z^{*n}g, f  \rangle_{\mathcal{H}_{{\mu},{\nu}}} = \langle
g, z^nf  \rangle_{\mathcal{H}_{{\mu},{\nu}}},
\]
we deduce that $\{M_z^{*n}g\}_{n\geq 0}$ is a bounded set. This
contradicts the fact that $M_z^*$ has a hypercyclic vector, and
hence
\begin{equation}\label{mu}
\displaystyle \liminf_{n} |\mu_n| = 0.
\end{equation}
Finally, since
\[
\frac {1} {(n!)^2} \frac {\partial^{2n}k_{\mu,\nu}} {\partial z^n
\partial \bar{w}^n}(0,0) = |\mu_n|^2+|\nu_{n-1}|^2 = |\mu_n|^2(1+|\nu_{n-1}^2/\mu_n^2|) \leq 2
|\mu_n|^2,
\]
for all $n \geq 1$, it follows that
\[
\liminf_n \frac {1} {(n!)^2} \frac {\partial^{2n}k_{\mu,\nu}}
{\partial z^n
\partial \bar{w}^n}(0,0) = 0,
\]
which is equivalent to \eqref{mu}.

A similar method applies also to the topological mixing property of $M_z^*$. Here we use the following fact: If $T$ is a mixing operator on a Banach space $X$, then $\displaystyle \lim_n \|T^{*n}x^*\| = \infty$ for all non-zero $x^*\in X^*$, where $X^*$ is the dual of $X$ (see Lemma 2.2(2), \cite{Bonet}). This together with the sufficient conditions in Theorem \ref{Main-sub} yields the following complete characterization:

\begin{theorem}
Consider the space $\mathcal{H}_{{\mu},{\nu}}$ defined as above, and assume that $\{\mu_n\}$ and $\{\nu_n\}$ satisfies the condition \eqref{weights}. Then

(i) $M_z^*$ on $\mathcal{H}_{{\mu},{\nu}}$ is hypercyclic if and only if
\[
\liminf_n \frac {1} {(n!)^2} \frac {\partial^{2n}k_{\mu,\nu}}
{\partial z^n
\partial \bar{w}^n}(0,0) = 0,
\]

(ii) $M_z^*$ on $\mathcal{H}_{{\mu},{\nu}}$ is topologically mixing if
and only if
\[
\lim_n \frac {1} {(n!)^2} \frac {\partial^{2n}k_{\mu,\nu}} {\partial
z^n
\partial \bar{w}^n}(0,0) = 0.
\]
\end{theorem}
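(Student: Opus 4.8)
The plan is to treat each equivalence as two implications, noting first that the ``if'' directions are already in hand. Since $k_{\mu,\nu}$ is a scalar kernel, Theorem \ref{Main-sub}(1) immediately gives hypercyclicity of $M_z^*$ whenever $\liminf_n \frac{1}{(n!)^2}\frac{\partial^{2n}k_{\mu,\nu}}{\partial z^n\partial\bar w^n}(0,0)=0$, and Theorem \ref{Main-sub}(2) gives topological mixing whenever the corresponding limit vanishes. So the burden is to prove the two necessity statements. My first reduction is to replace the kernel quantity by $|\mu_n|$: the already-derived identity $\frac{1}{(n!)^2}\frac{\partial^{2n}k_{\mu,\nu}}{\partial z^n\partial\bar w^n}(0,0)=|\mu_n|^2+|\nu_{n-1}|^2$ (for $n\ge 1$), together with the second half of \eqref{weights} (which yields $|\nu_{n-1}|<|\mu_n|$), gives the sandwich $|\mu_n|^2\le \frac{1}{(n!)^2}\frac{\partial^{2n}k_{\mu,\nu}}{\partial z^n\partial\bar w^n}(0,0)\le 2|\mu_n|^2$. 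Hence the hypercyclicity condition is equivalent to $\liminf_n|\mu_n|=0$ and the mixing condition to $\lim_n|\mu_n|=0$, and it suffices to prove these for $M_z^*$.

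The technical heart, which I expect to be the main obstacle, is the norm estimate $\|z^nf\|^2_{\mathcal{H}_{\mu,\nu}}\le C|\mu_n|^{-2}$ for the fixed vector $f:=k_{\mu,\nu}(\cdot,0)=a+bz$, with $C$ independent of $n$. I would obtain it by expanding $z^nf=\sum_{j\ge 0}\alpha_j e_j$ and solving the resulting triangular linear system in the $\alpha_j$: one finds $\alpha_j=0$ for $j<n$, $\alpha_n=a/\mu_n$, an explicit value of $\alpha_{n+1}$, and the recursion $\alpha_{n+1+k}=-(\nu_{n+k}/\mu_{n+1+k})\alpha_{n+k}$ for $k\ge 1$. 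The crucial point is that $R:=\sup_m|\nu_m/\mu_{m+1}|<1$ by \eqref{weights}, so the tail coefficients decay geometrically, $|\alpha_{n+1+k}|\le(\ldots)R^k$; since $\{e_j\}$ is orthonormal, $\|z^nf\|^2=\sum_j|\alpha_j|^2$ is then dominated by a geometric series whose total mass is $O(|\mu_n|^{-2})$, with $r:=\sup_n|\mu_n/\mu_{n+1}|$ controlling the constant.

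With this estimate in place, necessity for hypercyclicity follows by contradiction: if $\liminf_n|\mu_n|>0$, the bound makes $\{\|z^nf\|\}_n$ a bounded sequence, and then the adjoint relation $\langle M_z^{*n}g,f\rangle=\langle g,z^nf\rangle$ shows that $\{M_z^{*n}g\}_n$ is bounded for every $g\in\mathcal{H}_{\mu,\nu}$, so no orbit can be dense---contradicting hypercyclicity. Therefore $\liminf_n|\mu_n|=0$, which via the sandwich yields part (i).

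For necessity of mixing I would invoke the fact that a topologically mixing operator $T$ on a Banach space satisfies $\lim_n\|T^{*n}x^*\|=\infty$ for every nonzero $x^*$ (Bonet, Lemma 2.2(2) in \cite{Bonet}). Applying this with $T=M_z^*$ and identifying $(\mathcal{H}_{\mu,\nu})^*$ with $\mathcal{H}_{\mu,\nu}$ via the Riesz representation, the functional corresponding to $f$ gives $\|M_z^nf\|=\|z^nf\|\to\infty$. Comparing with $\|z^nf\|^2\le C|\mu_n|^{-2}$ forces $|\mu_n|\to 0$, i.e. $\lim_n|\mu_n|=0$, and the sandwich inequality converts this back to the stated kernel condition, completing part (ii).
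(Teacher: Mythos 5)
Your proposal is correct and follows essentially the same route as the paper's own argument: sufficiency from Theorem \ref{Main-sub}, the identity $\frac{1}{(n!)^2}\frac{\partial^{2n}k_{\mu,\nu}}{\partial z^n\partial\bar{w}^n}(0,0)=|\mu_n|^2+|\nu_{n-1}|^2$ combined with \eqref{weights} to reduce everything to $|\mu_n|$, the key estimate $\|z^nf\|^2_{\mathcal{H}_{\mu,\nu}}\leq C|\mu_n|^{-2}$ for $f=k_{\mu,\nu}(\cdot,0)$ obtained from the triangular system and geometric decay with $R<1$, the weak-orbit boundedness contradiction for hypercyclicity, and Bonet's Lemma 2.2(2) for the mixing direction. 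The only (shared, harmless) imprecision is that the bound on $\langle M_z^{*n}g,f\rangle$ gives boundedness of the weak orbit against $f$ rather than of $\{M_z^{*n}g\}$ itself, but that already rules out a dense orbit, exactly as in the paper.
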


We now confine our attention to the (counter-)examples
in Subsection \ref{example-1}. Here we again obtain a complete characterization of
dynamics by replacing the analytic function $\theta$ (defined by
$\theta(z) = \frac{1}{1-z}$) by (operator-valued) analytic
polynomials.

Let $\cle$ be a
Hilbert space and let $P(z)$ be a $\mathcal{B}(\cle)$-valued
(analytic) polynomial. Suppose that
\begin{equation*}
P(z)=A_0+A_1z+\cdots+A_dz^d,
\end{equation*}
for some $A_j \in \clb(\cle)$, $j = 0, \ldots, d$. Now we consider a
scalar-valued analytic (and diagonal) kernel $k$ on $\D$:
\[
k(z, w) = \sum_{n\geq 0}\beta_n^2 z^n \bar{w}^n,
\]
where $\beta_n$ are non-negative numbers, and let $\ke$ denote the
reproducing kernel Hilbert space corresponding to the kernel
\[
K(z, w) = k(z, w) I_{\cle},
\]
for all $z, w \in \D$. Let
\begin{equation}\label{eq-KP}
K_P(z,w)=P(z) K(z, w) {P(w)}^*,
\end{equation}
for all $z, w \in \D$. Then $K_P$ is a $\clb(\cle)$-valued kernel on
$\D$. Set
\[
\clh_P = \{P f : f \in \clh_{\cle}(K)\},
\]
where
\[
(Pf)(z) = P(z) f(z),
\]
for all $f \in \ke$ and $z \in \D$. Clearly $\clh_P$ is a vector
space of $\cle$-valued analytic functions on $\D$. Now suppose that
$A_0$ is injective. If $\displaystyle f=\sum_{n\geq 0} \eta_n z^n\in
\ke$ and
\begin{equation}\label{eq-P}
P(z) f(z) = 0,
\end{equation}
for all $z \in \D$, then
\[
P(0)f(0) = A_0 \eta_0 = 0,
\]
implies that $\eta_0=0$. Now differentiating \eqref{eq-P}, one gets
\[
P^\prime(z)f(z) + P(z)f^\prime (z) = 0,
\]
and hence
\[
0 = P^\prime(0) f(0) + P(0) f^\prime (0) = A_0 \eta_1.
\]
We have $\eta_1 = 0$. Continuing in this way we obtain that $f
\equiv 0$. It now follows that $\clh$ is a Hilbert space with inner
product defined by
\[
\langle Pf, Pg \rangle_{\mathcal{H}_P}: = \langle f,g\rangle_{\ke},
\]
for all $f$ and $g$ in $\ke$.

Now we show that $\clh_P$ is the reproducing kernel Hilbert space
corresponding to the kernel $K_P$. Let $\{P f_n\} \subseteq \clh_P$,
and let $P f_n \raro 0$ in $\clh_P$ as $n \raro \infty$. Since
\[
\|P f_n\|_{\clh_P} = \|f_n\|_{\ke},
\]
for all $n \geq 0$, it follows that $f_n \raro 0$ as $n \raro
\infty$ in $\ke$. Now, since $\ke$ is a reproducing kernel Hilbert
space, the evaluation operators are continuous. Therefore
\[
f_n(w) \raro 0 \quad \quad (w \in \D),
\]
in $\cle$ as $n \raro \infty$. Hence
\[
P(w) f_n(w) \raro 0 \quad \quad (w \in \D),
\]
in $\cle$ as $n \raro \infty$. This implies that the evaluation
operators on $\clh_P$ are bounded. Hence $\clh_P$ is a reproducing
kernel Hilbert space. Now, for any $\eta \in \cle$ and $w \in \D$ we
have that
\[
K_P(\cdot, w) \eta = P (k(\cdot, w) P(w)^*\eta) \in \mathcal{H}_P,
\]
and
\[
\begin{split}
\langle Pf, P (k(\cdot, w) P(w)^*\eta)  \rangle_{{\clh}_P} & =
\langle f, k(\cdot, w)P(w)^*\eta \rangle_{\clh_{\cle}(K)}
\\
& = \langle f(w),P(w)^*\eta \rangle_\cle\\&=\langle
P(w)f(w),\eta\rangle_{\cle},
\end{split}
\]
for all $f \in \ke$. Hence $K_P$ is the kernel of $\clh_P$.

Now, we observe that if $M_{z, K}$ (the multiplication operator on
$\ke$ by the coordinate function $z$) is bounded, then $M_{z, K}$ on
$\ke$ and $M_z$ on $\clh_P$ are unitarily equivalent. Moreover,
since $M_{z, K}$ on $\ke$ and $M_{z, k} \otimes I$ on $\clh(k)
\otimes \cle$ are unitarily equivalent, by a result of
Mart\'{i}nez-Gim\'{e}nez and Peris (Proposition 1.14 in
\cite{Gimenez-Peris}) and Salas \cite{Salas}, it follows that
$M_z^*$ is hypercyclic on $\mathcal{H}_P$ if and only if
\[
\liminf_n \beta_n=0.
\]
Now equating the coefficients of $z^n\bar{w}^n$, say $C_{nn}$, in
the expansion of $K_P(z,w)$  in \eqref{eq-KP}, we have (as in
Subsection \ref{example-1})
\[
C_{nn}=A_0A_0^*\beta_n^2+A_1A_1^*\beta_{n-1}^2+\cdots+A_dA_d^*\beta_{n-d}^2,
\]
for all $n\geq d$. Since $M_{z, k}$ is bounded, it follows that
\[
\sup_k \frac {\beta_k} {\beta_{k+1}}<\infty,
\]
and hence, there exists $M > 0$ such that
\[
\|C_{nn}\|_{\clb(\cle)} \leq  M~\beta_{n}^2,
\]
for all $n\geq 1$. Thus, if $M_z^*$ is hypercyclic on
$\mathcal{H}_P$, then, by the above observation, we see that
\[
\liminf_n\|C_{nn}\|_{\clb(\cle)} = 0.
\]
Hence, by
\[
C_{nn}= \frac {1} {(n!)^2} \frac {\partial^{2n}K_P} {\partial z^n
\partial \bar{w}^n}(0,0),
\]
for all $n \geq 0$, it follows that
\[
\liminf_{n}\left(\frac {1} {(n!)^2}\Big< \Big( \frac{\partial^{2n}
K_P}{\partial z^n\partial \bar{w}^n} (0,0) \Big) \eta,\eta
\Big>_{\cle}\right)=0,
\]
uniformly in $\eta\in \cle_1$, where $\cle_1$ is the open unit ball
in $\cle$. This proves the converse of the hypercyclicity part in
Theorem \ref{Main}. We have therefore shown the following result:

\begin{theorem}\label{thm-NS}
Let $\cle$ be a Hilbert space, $k$ on $\D$ be a scalar-valued kernel
and
\[
k(z, w) = \sum_{n\geq 0}\beta_n^2 z^n \bar{w}^n,
\]
where $\beta_n$ are non-negative numbers, and
\[
\displaystyle \limsup_{n} \frac{\beta_{n+1}}{\beta_n} \leq 1.
\]
Let $\ke$ denote the reproducing kernel Hilbert space corresponding
to the kernel
\[
K(z, w) = k(z, w) I_{\cle},
\]
for all $z, w \in \D$. Suppose that $\{A_j\}_{j=0}^d \subseteq
\clb(\cle)$, $A_0$ is injective,
\begin{equation*}
P(z)=A_0+A_1z+\cdots+A_dz^d,
\end{equation*}
and let $\clh_P$ denote the reproducing kernel Hilbert space
corresponding to the kernel function
\[
K_P(z,w)=P(z) K(z, w) {P(w)}^* \quad \quad (z, w \in \D).
\]
If $M_z$ is bounded on $\mathcal{H}_P$, then the following are
equivalent:

\noindent (i) $M_z^*$ is hypercyclic.\\
(ii) $\displaystyle \liminf_{n}\left(\frac {1} {(n!)^2}\Big< \Big(
\frac{\partial^{2n} K_P}{\partial z^n\partial \bar{w}^n} (0,0) \Big)
\eta,\eta \Big>_{\cle}\right)=0$ uniformly in $\eta \in \cle_1$, where $\cle_1$ is the open unit ball in $\cle$.\\
(iii) $\displaystyle \liminf_n \beta_n=0$.
\end{theorem}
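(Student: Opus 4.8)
The plan is to prove the cyclic chain $(i)\Leftrightarrow(iii)$, $(iii)\Rightarrow(ii)$, $(ii)\Rightarrow(iii)$, most of whose ingredients are already assembled in the discussion preceding the statement. For $(i)\Leftrightarrow(iii)$ I would argue structurally: the map $f\mapsto Pf$ is a unitary from $\ke$ onto $\clh_P$ intertwining $M_{z,K}$ with $M_z$, and the quasi-scalar identification $\ke\cong\clh(k)\otimes\cle$ carries $M_{z,K}$ to $M_{z,k}\otimes I_\cle$. Hence $M_z^*$ on $\clh_P$ is unitarily equivalent to $M_{z,k}^*\otimes I_\cle$ on $\clh(k)\otimes\cle$. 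I would then invoke Proposition 1.14 of \cite{Gimenez-Peris} together with Salas' theorem \cite{Salas} to conclude that this operator is hypercyclic exactly when $M_{z,k}^*$ is hypercyclic on $\clh(k)=H^2(\beta)$, that is, exactly when $\liminf_n\beta_n=0$.

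The bridge to $(ii)$ is the diagonal coefficient $C_{nn}$ of $K_P$. Expanding $K_P(z,w)=P(z)\,k(z,w)\,P(w)^*$ and collecting the coefficient of $z^n\bar w^n$ gives, for $n\geq d$,
\[
C_{nn}=\sum_{j=0}^{d}\beta_{n-j}^2\,A_jA_j^*=\frac{1}{(n!)^2}\,\frac{\partial^{2n}K_P}{\partial z^n\partial\bar w^n}(0,0),
\]
a positive operator on $\cle$. Each summand is positive, so $C_{nn}\geq\beta_n^2\,A_0A_0^*$; moreover boundedness of $M_{z,k}$ gives $L:=\sup_k\beta_k/\beta_{k+1}<\infty$ and hence $\beta_{n-j}\leq L^{\,j}\beta_n$. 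Together these yield the two-sided estimate
\[
\|A_0\|^2\,\beta_n^2\;\leq\;\|C_{nn}\|_{\clb(\cle)}\;\leq\;\Big(\sum_{j=0}^{d}\|A_j\|^2 L^{\,2j}\Big)\beta_n^2,
\]
whose upper half recovers the bound $\|C_{nn}\|\leq M\beta_n^2$ already noted, and whose lower half uses that $A_0$ is injective, so $\|A_0\|>0$.

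The final observation is that condition $(ii)$ is nothing but $\liminf_n\|C_{nn}\|=0$. Indeed, since $C_{nn}\geq0$ one has $\sup_{\eta\in\cle_1}\langle C_{nn}\eta,\eta\rangle=\|C_{nn}\|_{\clb(\cle)}$, so the ``uniform $\liminf$'' of $\frac{1}{(n!)^2}\langle(\partial^{2n}K_P/\partial z^n\partial\bar w^n)(0,0)\eta,\eta\rangle$ over $\eta\in\cle_1$ equals $\liminf_n\|C_{nn}\|$. With this reformulation the estimate closes the loop: along a subsequence with $\beta_n\to0$ the upper bound forces $\|C_{n_k n_k}\|\to0$, giving $(iii)\Rightarrow(ii)$; and along a subsequence with $\|C_{nn}\|\to0$ the lower bound forces $\beta_{n_k}^2\leq\|C_{n_k n_k}\|/\|A_0\|^2\to0$, giving $(ii)\Rightarrow(iii)$.

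I expect the only genuinely non-formal step to be $(i)\Leftrightarrow(iii)$: tensoring a hypercyclic operator with $I_\cle$ need not preserve hypercyclicity in general, so the transfer through the factor $I_\cle$ must rely on the fact that $M_{z,k}^*$ satisfies the full Hypercyclicity Criterion when $\liminf_n\beta_n=0$ (Theorem \ref{Main} and \cite{Salas}), which is precisely the hypothesis under which the proposition of \cite{Gimenez-Peris} applies. Everything else is the bookkeeping of positive operators and the geometric control $\beta_{n-j}\leq L^{\,j}\beta_n$ afforded by the boundedness of $M_z$.
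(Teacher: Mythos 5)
Your proposal is correct, and its skeleton coincides with the paper's own argument: the unitary $f \mapsto Pf$ identifying $M_z$ on $\clh_P$ with $M_{z,K}$ on $\ke \cong \clh(k)\otimes\cle$, the appeal to Proposition 1.14 of \cite{Gimenez-Peris} together with Salas' theorem for (i)$\Leftrightarrow$(iii), and the diagonal-coefficient computation $C_{nn}=\sum_{j=0}^{d}\beta_{n-j}^2A_jA_j^*$ with the upper bound $\|C_{nn}\|_{\clb(\cle)}\leq M\beta_n^2$ (coming from $\sup_k \beta_k/\beta_{k+1}<\infty$) to pass from (iii) to (ii) are all exactly the paper's steps. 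Where you genuinely depart from the paper is in how the cycle is closed. The paper never proves (ii)$\Rightarrow$(iii) directly: it gets (ii)$\Rightarrow$(i) by invoking Theorem \ref{Main}(1) with $\cle_0$ taken to be the open unit ball $\cle_1$, so the Hypercyclicity Criterion machinery is the engine for that implication, and the theorem is then read as ``the converse of Theorem \ref{Main}'' for this class of kernels. You instead add the elementary lower bound $C_{nn}\geq \beta_n^2 A_0A_0^*$ (each summand $\beta_{n-j}^2A_jA_j^*$ is positive, and $\|A_0\|>0$ since $A_0$ is injective), and combine it with your correct observation that, by positivity of $C_{nn}$, condition (ii) is equivalent to $\liminf_n\|C_{nn}\|_{\clb(\cle)}=0$; this yields (ii)$\Rightarrow$(iii) by pure operator-norm bookkeeping. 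The trade-off: your route makes the equivalence (ii)$\Leftrightarrow$(iii) completely elementary and independent of any dynamics, concentrating all the dynamical input in (i)$\Leftrightarrow$(iii), whereas the paper's route is shorter given that Theorem \ref{Main} is already in hand and makes the logical role of the result more visible. Finally, your closing caveat about the tensor-product step --- that hypercyclicity of $M_{z,k}^*$ alone would not automatically transfer through $\otimes\, I_\cle$, and that the transfer rests on $M_{z,k}^*$ satisfying the Hypercyclicity Criterion when $\liminf_n\beta_n=0$ --- is a correct reading of why the citation of \cite{Gimenez-Peris} is legitimate; the paper uses that citation without comment.
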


The same method of proof also applies to the scalar-valued reproducing
kernel Hilbert space $\clh_P$ corresponding to the kernel function
\[
k_P(z,w)=P(z)k(z, w) \overline{P(w)} \quad \quad (z, w \in \D),
\]
where
\[
P(z)=a_0 +a_1z + \cdots + a_dz^d,
\]
is a non-zero scalar polynomial and $k$ is defined as in Theorem
\ref{thm-NS} (observe that $k_P(z,w)$ is not a diagonal kernel in
general). Here $\clh_P = \{P f : f \in \clh(k)\}$ and the inner
product is given by $\langle Pf, Pg \rangle_{\clh_P} = \langle f, g
\rangle_{\clh(k)}$ for all $f, g \in \clh(k)$.

\begin{theorem}
If $M_z$ is bounded on $\mathcal{H}_P$, then the following are
equivalent:

\noindent (i) $M_z^*$ is hypercyclic.\\
(ii) $\displaystyle \liminf_n \frac {1} {(n!)^2} \Big(\frac
{\partial^{2n}k} {\partial z^n
\partial \bar{w}^n}(0,0)\Big)=0$.\\
(iii) $\displaystyle \liminf_n \beta_n=0$.
\end{theorem}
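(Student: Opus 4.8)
The plan is to reduce the whole statement to the scalar weighted-shift case already settled by Salas, by means of a single unitary equivalence, and to observe that the two analytic conditions (ii) and (iii) are in fact one and the same. For the equivalence (ii) $\iff$ (iii), I would use that $k(z,w)=\sum_{n\geq 0}\beta_n^2 z^n\bar{w}^n$ is diagonal, so the coefficient of $z^n\bar{w}^n$ in $k$ is exactly $\beta_n^2$; by Corollary \ref{cor-normK}, in its scalar form recorded in the remark following that corollary, this gives $\frac{1}{(n!)^2}\frac{\partial^{2n}k}{\partial z^n\partial\bar{w}^n}(0,0)=\beta_n^2$. Since the $\beta_n$ are non-negative, $\liminf_n\beta_n^2=0$ if and only if $\liminf_n\beta_n=0$, which is precisely the content of (ii) $\iff$ (iii).

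It then remains to prove (i) $\iff$ (iii), and for this I would follow the operator-valued argument preceding Theorem \ref{thm-NS} almost verbatim, with the injectivity of $A_0$ replaced by the injectivity of multiplication by the non-zero polynomial $P$. First I would check that $f\mapsto Pf$ is injective on $\clh(k)$: if $Pf\equiv 0$ with $P\neq 0$, then $f$ vanishes off the finite zero set of $P$, hence $f\equiv 0$; this makes the inner product $\langle Pf,Pg\rangle_{\clh_P}:=\langle f,g\rangle_{\clh(k)}$ well defined and turns $\clh_P$ into a Hilbert space. A short continuity argument, namely that $Pf_n\raro 0$ in $\clh_P$ forces $f_n\raro 0$ in $\clh(k)$, hence $f_n(w)\raro 0$ and so $P(w)f_n(w)\raro 0$ pointwise, shows the evaluation maps on $\clh_P$ are bounded, so $\clh_P$ is a reproducing kernel Hilbert space whose kernel is identified with $k_P$ exactly as in the operator-valued computation.

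The heart of the matter is the unitary equivalence. I would define $U:\clh(k)\raro\clh_P$ by $Uf=Pf$. By the very definition of the norm on $\clh_P$, $U$ is an onto isometry, hence unitary, and the intertwining $U M_{z,k} f = P(z)\,zf(z) = z\,P(z)f(z) = M_z(Uf)$ shows $U M_{z,k}=M_z U$; in particular $M_z$ is bounded on $\clh_P$ exactly when $M_{z,k}$ is bounded on $\clh(k)$, and $M_z$ on $\clh_P$ is unitarily equivalent to $M_{z,k}$ on $\clh(k)$, whence so are the adjoints $M_z^*$ and $M_{z,k}^*$. Since hypercyclicity is a unitary invariant, $M_z^*$ is hypercyclic on $\clh_P$ if and only if $M_{z,k}^*$ is hypercyclic on $\clh(k)$, and by Salas' theorem (in the $H^2(\beta)$ formulation of Section \ref{Sec-P}) the latter holds if and only if $\liminf_n\beta_n=0$. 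This yields (i) $\iff$ (iii) and closes the chain.

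I expect no serious obstacle: the polynomial case is actually cleaner than the genuinely operator-valued Theorem \ref{thm-NS}, since the diagonal kernel $k$ already carries all of the dynamical information and the twist by $P$ is absorbed by the unitary $U$. The only point demanding care is confirming that the mere non-vanishing of $P$ (rather than a hypothesis on a leading or constant coefficient) suffices for injectivity of $f\mapsto Pf$; this is the scalar analogue of the role played by injectivity of $A_0$ in Theorem \ref{thm-NS}, and it holds because a non-zero polynomial has only finitely many zeros.
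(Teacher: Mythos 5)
Your treatment of (i) $\Leftrightarrow$ (iii) is correct and is precisely the paper's own argument: the map $Uf = Pf$ is a well-defined unitary from $\clh(k)$ onto $\clh_P$, it intertwines $M_{z,k}$ with $M_z$, and Salas' characterization on $H^2(\beta)$ then transfers. Your observation that injectivity of $f \mapsto Pf$ needs only $P \not\equiv 0$ (a non-zero polynomial has finitely many zeros, so $Pf \equiv 0$ forces the analytic function $f$ to vanish identically), rather than the scalar analogue $a_0 \neq 0$ of the hypothesis that $A_0$ be injective in Theorem \ref{thm-NS}, is a correct and genuinely needed adjustment, since this statement does not assume $P(0)\neq 0$.

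Where you diverge from the paper is in condition (ii), and this is the point to worry about. Read literally, (ii) is a condition on the diagonal kernel $k$, and then, as you say, $\frac{1}{(n!)^2}\frac{\partial^{2n}k}{\partial z^n\partial\bar{w}^n}(0,0)=\beta_n^2$, so (ii) is an immediate restatement of (iii) and nothing needs proving. But that reading makes the theorem empty as a converse result. The intent --- compare item (ii) of Theorem \ref{thm-NS}, which is a condition on $K_P$ and not on $K$, and the remark closing the counter-example in Section 5, which promises that for these spaces the sufficient condition of Theorem \ref{Main-sub} is also \emph{necessary} --- is that (ii) concerns $k_P$, the kernel of the space $\clh_P$ on which $M_z^*$ actually acts; and $k_P$ is not diagonal, so the equivalence is no longer trivial. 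Under that reading your proposal has a genuine gap: you never relate the derivatives of $k_P$ to the weights $\beta_n$, and contrary to your closing remark, the ``twist by $P$'' is \emph{not} absorbed by $U$ at the level of kernel derivatives. The paper's method fills this in two steps. For (iii) $\Rightarrow$ (ii): expanding $k_P(z,w)=P(z)k(z,w)\overline{P(w)}$ gives, for all $n \geq d$,
\[
\frac{1}{(n!)^2}\frac{\partial^{2n}k_P}{\partial z^n\partial\bar{w}^n}(0,0)
= \sum_{j=0}^{d}|a_j|^2\,\beta_{n-j}^2 \;\leq\; C\,\beta_n^2,
\]
where $C<\infty$ because boundedness of $M_z$ forces $\sup_n \beta_n/\beta_{n+1}<\infty$; hence $\liminf_n\beta_n=0$ implies the corresponding $\liminf$ for $k_P$ vanishes. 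For (ii) $\Rightarrow$ (i): this is exactly the sufficiency half, Theorem \ref{Main} (equivalently Theorem \ref{Main-sub}) applied to the scalar kernel $k_P$. Adding these two implications to your unitary-equivalence argument closes the three-way equivalence; without them you have proved only the trivialized literal statement.
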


It would be therefore very interesting to determine the class of
vector valued analytic kernel functions for which the converse of
Theorem \ref{Main} holds.

\vspace{0.3in}

{\bf Acknowledgments.} The authors would like to express their
sincerest gratitude to the referee for careful reading of the
manuscript, thoughtful remarks, and for many valuable suggestions.
The authors are also grateful to Deepak Pradhan for a careful
reading of the manuscript and valuable comments. The first author is
grateful to the Indian Statistical Institute Bangalore for offering
a visiting position. The first author is also supported in part by
the NBHM post doctoral fellowship, File No.
0104/43/2017/RD-II/15354. The second author is supported in part by
the Mathematical Research Impact Centric Support (MATRICS) grant,
File No : MTR/2017/000522, by the Science and Engineering Research
Board (SERB), Department of Science \& Technology (DST), Government
of India, and NBHM (National Board of Higher Mathematics, DAE, India)
Research Grant NBHM/R.P.64/2014.

\bibliographystyle{amsplain}

\end{document}